\newtheorem{theorem}{Theorem}[section]
\newtheorem{lemma}[theorem]{Lemma}
\newtheorem{proposition}[theorem]{Proposition}
\newtheorem{corollary}[theorem]{Corollary}
\newtheorem{mainthm}{Theorem}
\theoremstyle{definition}
\newtheorem{definition}[theorem]{Definition}
\theoremstyle{remark}
\newtheorem{remark}[theorem]{Remark}
\numberwithin{equation}{section}
\newcommand{\R}{\mathbb{R}}
\newcommand{\EE}{\mathbb{E}}
\newcommand{\set}[2]{\left\{ #1 \ | \ #2\right\}}
\newcommand{\vG}{\mathcal{G}}
\newcommand{\seg}{\underline}
\newcommand{\dd}{\mathrm{d}}
\DeclareMathOperator{\vol}{vol}
\DeclareMathOperator{\MV}{V}
\DeclareMathOperator{\erf}{erf}
\title{Gaussian Zonoids, Gaussian determinants and Gaussian random fields}
\author{Léo Mathis}
\address{Department of Mathematics, Goethe Universität, Frankfurt am Main, Germany}
\email{Mathis@mathematik.uni-frankfurt.de}
\keywords{Convex geometry, zonoids, Gaussian Random Fields}
\begin{document}

\begin{abstract}
  We study the Vitale zonoid (a convex body associated to a probability distribution) associated to a non--centered Gaussian vector. This defines a family of convex bodies, that contains and generalizes ellipsoids, which we call \emph{Gaussian zonoids}. We show that each Gaussian zonoid can be approximated by an ellipsoid that we compute explicitely. We use this result to give new estimates for the expectation of the absolute {value of the} determinant of a non--centered Gaussian matrix in terms of mixed volume of ellipsoids. Finally, exploiting a recent link between random fields and zonoids uncovered by Stecconi and the author, we apply our results to the study of the zero set of non--centered Gaussian random fields. We show how these can be approximated by a suitable centered Gaussian random field and give a quantitative asymptotic in the limit where the variance goes to zero.
\end{abstract}

\maketitle

\section{Introduction}
Given a random vector $X\in\R^m$ such that $\EE\|X\|<+\infty$, Richard A. Vitale introduced in~\cite{Vitale} a way to build a convex body $\EE\seg{X}\subset\R^m$, associated to the law of $X$, that can be thought as the \emph{average} of the random convex body $\tfrac{1}{2}[-X,X]$, see~\eqref{eq:vitalezon} below and also~\cite[Definition~2.3]{ZA}. 

The convex bodies in $\R^m$ that can be obtained as $\EE\seg{X}$ for some random vector $X\in\R^m$ are called \emph{zonoids} \cite[Theorem~3.1]{Vitale} and play an important role in convex geometry, see for example~\cite[Section~3.5, 5.3 and 6.4]{bible}. Moreover this construction with random vectors appears in recent works such as~\cite{MOLCH1, MOLCH2, ZA, popeRandzon} and has a variant, introduced by Karl Mosler, called the \emph{lift zonoid} with applications in statistics, see~\cite{MoslerLiftzon}. Consistently with~\cite{ZA}, we will call $\EE\seg{X}$ the \emph{Vitale zonoid} associated to the random vector $X$. 

In this paper we study the Vitale zonoid associated to a Gaussian vector (that we assume non--degenerate), we call those \emph{Gaussian zonoids}. A similar construction can be found in~\cite[Example~2.10]{MoslerLiftzon}. In the case where the Gaussian vector $X\in\R^m$ is centered, that is when $\EE X=0$, the Gaussian zonoid $\EE\seg{X}$ is an ellipsoid, see Proposition~\ref{prop:GauBall} below. Therefore, Gaussian zonoids can be seen as a generalization of ellipsoids.


The main result is the following, see Theorem~\ref{thm:incluchain} for the precise statement and the definition of $b_\infty$.

\begin{mainthm}\label{introthm:incluchain}
    There is an universal constant $b_\infty\sim 0.91$ such that for {any} Gaussian zonoid $\EE\seg{X}$ there is an ellipsoid $\mathcal{E}$ such that 
    \begin{equation}\label{eq:incluchainintro}
        b_\infty\mathcal{E}\subset \EE\seg{X}\subset \mathcal{E}.
    \end{equation}
\end{mainthm}


As an immediate consequence, we get upper and lower bounds for the volume of Gaussian zonoids, see Corollary~\ref{cor:volwithlambda}. Moreover, by computing the volume of a limit convex body of Gaussian zonoids, we obtain, in Proposition~\ref{prop:lowvolandlim}, a better lower bound for the volume of Gaussian zonoids as well as an asymptotic when the mean of the Gaussian vector goes to infinity.

Vitale shows in~\cite[Theorem~3.2]{Vitale} that, if $M\in\R^{m\times m}$ is a random matrix with iid columns distributed as $X\in\R^m$, then $\EE|\det(M)|=m!\vol_m(\EE\seg{X})$. In \cite{MolchanovWespi}, Molchanov and Wespi generalize this result to the case where $M$ has independent columns that are not identically distributed, $\EE|\det(M)|$ is then equal to the \emph{mixed volume} of the associated Vitale zonoids.

{We then apply this result to Theorem~\ref{introthm:incluchain}. As a direct consequence, we obtain an estimate of }the expectation of the absolute determinant of a matrix with independent non--centered Gaussian vector with the mixed volume of ellipsoids, see Theorem~\ref{thm:randet} for the precise and more general statement.
\begin{mainthm}
     Let $M\in\R^{m\times m}$ be a random matrix whose columns are independent non--centered Gaussian vectors, then there are ellipsoids $\mathcal{E}_1,\ldots,\mathcal{E}_m$ such that
    \begin{equation}
        (b_\infty)^m\alpha_{m,m}\MV(\mathcal{E}_1,\ldots,\mathcal{E}_m)\leq \EE|\det(M)|\leq \alpha_{m,m} \MV(\mathcal{E}_1,\ldots,\mathcal{E}_m),
    \end{equation}
    where $\MV$ denotes the mixed volume, $\alpha_{m,m}$ is a constant depending only on the dimension $m$.
\end{mainthm}

This is to be compared with the result of Zakhar Kabluchko and Dmitry Zaporozhets in \cite{KabluchandZap} which prove that the expected absolute random determinant of a matrix with \emph{centered} Gaussian independents columns is \emph{equal} to the mixed volume of ellipsoids. Thus, Theorem~\ref{thm:incluchain} and Theorem~\ref{thm:randet} can be interpreted as follows: for every \emph{non--centered} Gaussian vector $X\in\R^m$ there is a \emph{centered} Gaussian vector $Y\in\R^m$ such that, for random determinants, $X$ is ``bounded'' by $Y$ from above and by $b_\infty Y$ from below.

Finally, we apply our result to the study of zero sets of Gaussian random fields. A Gaussian random field (GRF) is a random function $X$ on a smooth manifold $M$ such that the evaluation at {tuples} of points is a Gaussian vector. We use the recent results of Michele Stecconi and the author \cite{ZonSec} to link this study with the Gaussian zonoids, see Proposition~\ref{prop:MatStekmain}. We consider the case where the GRF is given by $X_\tau=\varphi+\tau Y$, where $\varphi\in C^\infty(M)$ is a fixed function, $\tau>0$ and $Y$ is a non--degenerate centered GRF with constant variance{, i.e., $Y$ is a GRF such that for all $p\in M$, $\EE[Y(p)]=0$ and $\EE[Y(p)^2]=1$}. For small $\tau>0$, the zero set $Z_\tau:=X_\tau^{-1}(0)$ can be seen as a random perturbation of the hypersurface $Z_0:=\varphi^{-1}(0)$. 

We obtain the following, see Theorem~\ref{thm:centeredGRFestimate} for the precise statement.
\begin{mainthm}
     For every $\tau>0$, there is a \emph{centered} GRF $\widetilde{X}_\tau$ with zero set $\widetilde{Z}_\tau:=\widetilde{X}_\tau^{-1}(0)$, such that for every \emph{admissible} random curve $\mathcal{W}$ and every open set $U$, we have
    \begin{equation} \label{eq:thmC}
    b_\infty\cdot\EE\#\left(\widetilde{Z}_\tau\cap \mathcal{W}\cap U\right)\leq\EE\#\left(Z_\tau\cap \mathcal{W}\cap U\right)\leq \EE\#\left(\widetilde{Z}_\tau\cap \mathcal{W}\cap U\right).
    \end{equation}
\end{mainthm}

Here, a random curve $\mathcal{W}$ is \emph{admissible} if it is the zero set of a vector valued non--degenerate GRF independent of $\widetilde{X}_\tau$ and ${X}_\tau$, see Remark~\ref{rk:vectGRF}. {Note that the expectations in \eqref{eq:thmC} can take value $+\infty$. In that case, \eqref{eq:thmC} implies that $\EE\#\left(Z_\tau\cap \mathcal{W}\cap U\right)=+\infty$ if and only if $\EE\#\left(\widetilde{Z}_\tau\cap \mathcal{W}\cap U\right)=+\infty$.}

We conclude this paper by studying the asymptotic behaviour of the random hypersurface $Z_\tau$ as $\tau\to 0$. In this limit, we expect $Z_\tau$ to \emph{concentrate} near $Z_0$. To make this statement rigorous and to describe \emph{how fast} this concentration occurs, we consider the neighborhood of $Z_0$ given by $\mathcal{U}_r:=\left\{|\varphi|<r\right\}$ and study the number
\begin{equation}
n_{r,\tau}:=\EE\#\left(Z^{(1)}_\tau\cap\cdots\cap Z^{(m)}_\tau\cap \mathcal{U}_r\right),
\end{equation}
where $Z^{(1)}_\tau,\ldots, Z^{(m)}_\tau$ are iid copies of $Z_\tau$.

We prove the following, see Theorem~\ref{thm:concentration} for the precise statement {and the explicit values of the constants}.
\begin{mainthm}
    Assume that $M$ is compact and that $0$ is a regular value of $\varphi$ and let $r=r(\tau)$ go to zero as $\tau\to 0$, we have:
    \begin{equation}\label{eq:limnrtintro}
    \lim_{\tau\to 0} n_{r,\tau}=c_m\cdot \erf\left(c'_m\cdot \alpha\right)\cdot \vol_{m-1}(Z_0),
    \end{equation}
    where $c_m,c'_m$ are constants depending only on the dimension $m$, $\alpha\in[0,+\infty]$ is the limit of $r/\tau$ as $\tau\to 0$, $\erf$ denotes the \emph{error function}, and the volume of $Z_0$ is the Riemannian volume in the Riemannian metric defined by $Y$ (see Definition~\ref{def:nondegGRF}).
\end{mainthm} 

This result can be interpreted as follows. If we let $r=\alpha\cdot\tau^{s}$ where $\alpha,s>0$, then the limit~\eqref{eq:limnrtintro} is zero for $s>1$ and is positive and constant (i.e. independent of $s$) for $s<1$. Equation~\eqref{eq:limnrtintro} then describes what happens in the critical regime $s=1$. 



\section{Gaussian zonoids}\label{sec:zonoids}

\subsection{Convex geometry}\label{subsec:convex}
We start by recalling some classical facts from convex geometry, the reader can refer to~\cite{bible} for more details. A \emph{convex body} in $\R^m$ is a non-empty convex compact subset of $\R^m$. If $K\subset \R^m$ is a convex body then its \emph{support function}, denoted $h_K:\R^m\to\R$, is defined for all $u\in\R^m$ by
\begin{equation}\label{eq:defsup}
    h_K(u):=\sup\set{\langle u, x\rangle}{x\in K}.
\end{equation}
It turns out that the support function characterizes the convex body, meaning that $K_1=K_2$ if and only if $h_{K_1}=h_{K_2}.$ Moreover the supremum norm of the support function (restricted to the sphere) defines a distance on the space of convex bodies called the \emph{Hausdorff distance}: $\dd(K,L):=\sup\set{|h_K(u)-h_L(u)|}{\|u\|=1}$, see~\cite[Lemma 1.8.14]{bible}. In the following, we will always consider the space of convex bodies endowed with the topology induced by this distance. We will denote by $B_m\subset\R^m$ the unit ball centered at $0$ and write $\kappa_m:=\vol_m(B_m)$.

The support function satisfy some properties that we summarize in the following proposition. These results being classical in convex geometry, we will omit the proof when it is not obvious but we will rather give the precise reference in \cite{bible}.

\begin{proposition}[Properties of the support function]\label{prop:supprop}
Let $K,L\subset \R^m$ be convex bodies. The following hold.
\begin{enumerate}
    \item  $K\subset L$ if and only if $h_K\leq h_L$.
    \item  If $T:\R^m\to\R^n$ is a linear map, then $h_{T(K)}=h_K\circ T^t.$
    \item Let $(K_n)_{n\in\mathbb{N}}$ be a sequence of convex bodies and let $h:\R^n\to\R$ be such that $h_{K_n}$ converges pointwise to $h$. Then $h$ is the support function of a convex body $K$ and $K_n$ converges to $K$ in the Hausdorff distance topology.
    \item If $h_K$ is differentiable at $u\in S^{m-1}$, then its gradient $\nabla h_K(u)$ is the unique point of the boundary $\partial K$ that admits $u$ as an outer unit normal. In particular if $h_K$ is $C^1$ on $\R^m\setminus\{0\}$ then the restriction of the gradient on the unit sphere $(\nabla h_K)_{|S^{m-1}}$ parametrises the boundary $\partial K$.
\end{enumerate}
\end{proposition}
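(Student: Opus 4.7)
The plan is to treat the four items separately, with (i), (ii), and (iv) reducing almost immediately to the definition of the support function, while (iii) requires a classical convergence fact about convex functions.

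For (i), the forward implication is a one-line supremum comparison: $K\subset L$ forces $h_K(u)=\sup_{x\in K}\langle u,x\rangle\leq \sup_{x\in L}\langle u,x\rangle=h_L(u)$. For the converse I would invoke hyperplane separation: if some $x\in K\setminus L$ existed, a separating hyperplane would supply a direction $u$ with $\langle u,x\rangle>h_L(u)\geq h_K(u)$, contradicting $x\in K$ via the defining sup. Part (ii) is then a direct computation:
\[
h_{T(K)}(u)=\sup_{y\in T(K)}\langle u,y\rangle=\sup_{x\in K}\langle u,Tx\rangle=\sup_{x\in K}\langle T^t u,x\rangle=h_K(T^t u).
\]

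The main work lies in (iii), and this is the step I expect to be the main obstacle. The pointwise limit $h$ of sublinear, positively $1$-homogeneous functions is again sublinear and $1$-homogeneous, hence the support function of some closed convex set $K$. To see that $K$ is actually a convex body (bounded), I would note that pointwise convergence of $h_{K_n}$ at the $2m$ directions $\pm e_1,\ldots,\pm e_m$ uniformly bounds the coordinates of the points of $K_n$, and this bound persists in the limit. For the Hausdorff convergence itself, the crucial input is the classical fact that a pointwise convergent sequence of finite convex functions on an open subset of $\R^m$ automatically converges uniformly on compact subsets. Applied on an open annulus containing $S^{m-1}$, this upgrades the pointwise convergence of $h_{K_n}$ to uniform convergence on $S^{m-1}$, which by definition is Hausdorff convergence.

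Finally, for (iv), I would use the identification of the subdifferential $\partial h_K(u)$ with the contact face $F(K,u):=\{x\in K:\langle u,x\rangle=h_K(u)\}$. Differentiability of $h_K$ at $u\in S^{m-1}$ forces $\partial h_K(u)$ to be a singleton, so the supremum defining $h_K(u)$ is attained at a unique point of $K$; by the subdifferential-face identification this point is $\nabla h_K(u)$, and it is a boundary point of $K$ admitting $u$ as outer unit normal. When $h_K$ is $C^1$ on $\R^m\setminus\{0\}$, every $u\in S^{m-1}$ produces such a boundary point; combined with the standard fact that every boundary point of a convex body admits at least one outer unit normal, this shows that $(\nabla h_K)_{|S^{m-1}}$ parametrises $\partial K$.
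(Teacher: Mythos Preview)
Your proposal is correct and follows essentially the same route as the paper, which simply defers items (iii) and (iv) to the standard references in Schneider's book and reduces (i) to the characterization $x\in K\iff\langle u,x\rangle\leq h_K(u)$ for all $u$ (equivalent to your separation argument). Your write-up just unpacks those citations: the convex-function convergence theorem behind (iii) and the subdifferential--face identification behind (iv) are precisely the ingredients of the cited results.
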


\begin{proof}
Item 1 follows from the fact that $x\in K$ if and only if $\langle u, x \rangle \leq h_K(u)$ for all $u\in \R^m$. Item 2 is a direct consequence of the definition of the support function in~\eqref{eq:defsup}. Finally, item 3 is~\cite[Theorem~1.8.15]{bible} and item 4 is~\cite[Corollary~1.7.3]{bible}.
\end{proof}

Let us recall the notion of \emph{mixed volume}. Given two convex bodies $K,L\subset \R^m$ one can define their \emph{Minkoxski sum}: $K+L:=\set{x+y}{x\in K,\ y\in L}.$ A fundamental result by Minkowski (see~\cite[Theorem~5.1.7]{bible}) says that given convex bodies $K_1,\ldots,K_m\subset \R^m$ the function $(t_1,\ldots,t_m)\mapsto\vol_m(t_1K_1+\cdots+t_mK_m)$ is a polynomial in $t_1,\ldots,t_m\in\R$. The coefficient of $t_1\cdots t_m$ is called the \emph{mixed volume} of $K_1,\ldots,K_m$ and is denoted $\MV(K_1,\ldots,K_m)$. In some sense this is a \emph{polarization} of the function $\vol_m$ on convex bodies of $\R^m$. We gather the properties of mixed volume that will be useful for us in the next proposition. Once again, we omit the proof but give the precise reference in~\cite{bible}.

\begin{proposition}[Properties of the mixed volume]\label{prop:MVprop}
Let $K,L,K_1,\ldots,K_m\subset \R^m$ be convex bodies  and $a,b\geq0$. The following hold.
\begin{enumerate}
\item The mixed volume is completely symmetric: for any permutation $\sigma\in\mathfrak{S}_m$, we have $\MV(K_{\sigma(1)},\ldots, K_{\sigma(m)})=\MV(K_1,\ldots,K_m)$.
\item The mixed volume is Minkowski linear in each variable: $\MV(aK+bL,K_2,\ldots,K_m)=a\MV(K,K_2,\ldots,K_m)+b\MV(L,K_2,\ldots,K_m)$.
\item We have $\MV(K,\ldots,K)=\vol_m(K)$. More generally, if $K$ is of dimension $k\leq m$, we have 
$\vol_k(K)=\frac{\binom{m}{k}}{\kappa_{m-k}}\MV(K[k],B_m[m-k]),$
where recall that $B_m\subset\R^m$ is the unit ball, $\kappa_{j}:=\vol_j(B_j)$ and where $L[j]$ denotes that the convex body $L$ is repeated $j$ times in the argument of the mixed volume.
\item We have $\MV(K_1,\ldots,K_m)>0$ if and only if there exist segments $[x_1,y_1]\subset K_1$, $\ldots,[x_m,y_m]\subset K_m$ such that $y_1-x_1,\ldots y_m-x_m$ is a basis of $\R^m$. Otherwise $\MV(K_1,\ldots,K_m)=0$.
\item The mixed volume is monotone: if $K\subset L$ then $\MV(K,K_2,\ldots,K_m)\leq\MV(L,K_2,\ldots,K_m)$.
\end{enumerate}
\end{proposition}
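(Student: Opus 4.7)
My plan is to derive each item by directly manipulating the defining polynomial $\vol_m(t_1K_1+\cdots+t_mK_m)$, quoting~\cite{bible} only where more structure is needed.

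Item~(i) is automatic: Minkowski addition is commutative, so the polynomial $\vol_m(t_1K_{\sigma(1)}+\cdots+t_mK_{\sigma(m)})$ equals $\vol_m(t_{\sigma^{-1}(1)}K_1+\cdots+t_{\sigma^{-1}(m)}K_m)$, and the coefficient of the symmetric monomial $t_1\cdots t_m$ is unchanged. For item~(ii), I would consider the $(m+1)$-variable polynomial $\vol_m(sK+uL+t_2K_2+\cdots+t_mK_m)$, identify the coefficients of $s\,t_2\cdots t_m$ and $u\,t_2\cdots t_m$ with $\MV(K,K_2,\ldots,K_m)$ and $\MV(L,K_2,\ldots,K_m)$ respectively (the same normalization applies since all exponents are~$1$), and then substitute $s=at_1$, $u=bt_1$ and read off the coefficient of $t_1\cdots t_m$. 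Item~(v) then falls out of (i), (ii), and the obvious monotonicity of $\vol_m$ under Minkowski sums: $K\subset L$ implies $tK+K_2+\cdots+K_m\subset tL+K_2+\cdots+K_m$, and comparing the coefficient of $t$ in the corresponding polynomial expansions at $t=0$ yields the inequality.

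The first identity of item~(iii) comes from expanding $\vol_m((t_1+\cdots+t_m)K)=(t_1+\cdots+t_m)^m\vol_m(K)$ by the multinomial theorem and reading off the coefficient of $t_1\cdots t_m$, using the convention that $\MV$ is normalized so that $\MV(K,\ldots,K)=\vol_m(K)$. The lower-dimensional identity, which expresses $\vol_k(K)$ as a normalized mixed volume with the unit ball, is the classical relationship between mixed volumes and intrinsic volumes, and I would simply appeal to~\cite[Section~5.3]{bible}.

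Item~(iv) is the step I expect to be the real hurdle. The "if" direction is approachable: using the monotonicity from~(v), one can shrink each $K_i$ down to a segment $[x_i,y_i]\subset K_i$, after which a direct computation from the Minkowski-sum definition of $\vol_m$ gives that the mixed volume of segments $[0,v_1],\ldots,[0,v_m]$ with directions spanning $\R^m$ is a positive constant times $|\det(v_1,\ldots,v_m)|$. The converse---that in the absence of such segments the mixed volume vanishes---rests on a more subtle dimensional argument originally due to Minkowski, and rather than reconstruct it I would invoke~\cite[Theorem~5.1.8]{bible}.
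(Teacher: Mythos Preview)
The paper's own proof is purely bibliographic: each of the five items is dispatched with a pointer to the relevant statement in Schneider's book~\cite{bible}, with no argument supplied. Your approach is more ambitious in that you try to extract (i), (ii), the diagonal case of (iii), part of (iv), and (v) directly from the defining polynomial $\vol_m(t_1K_1+\cdots+t_mK_m)$, reserving citations only for the lower--dimensional formula in (iii) and the converse in (iv). The arguments you sketch for (i), (ii) and the first identity in (iii) are correct, and the ``if'' direction of (iv) via monotonicity and the parallelotope computation is also fine.

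The genuine gap is in your derivation of (v). From $K\subset L$ you correctly obtain
\[
\vol_m(tK+K_2+\cdots+K_m)\leq\vol_m(tL+K_2+\cdots+K_m)\quad\text{for all }t\geq 0,
\]
and since both sides agree at $t=0$ you may compare their linear coefficients in $t$. But that coefficient is $m\,\MV\!\bigl(K,(K_2+\cdots+K_m)[m-1]\bigr)$, which after expanding by multilinearity is a sum of terms $\MV(K,K_{i_2},\ldots,K_{i_m})$ over \emph{all} tuples $(i_2,\ldots,i_m)\in\{2,\ldots,m\}^{m-1}$, not the single term $\MV(K,K_2,\ldots,K_m)$. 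An inequality between two such sums does not imply an inequality between matching summands; more generally, for homogeneous polynomials with non--negative coefficients, pointwise domination on the positive orthant does not force coefficient--wise domination (e.g.\ $2xy\leq x^2+y^2$). Monotonicity of the mixed volume genuinely requires extra structure beyond the polynomial expansion---the standard route is the integral representation $\MV(K_1,\ldots,K_m)=\tfrac{1}{m}\int_{S^{m-1}}h_{K_1}\,\mathrm{d}S(K_2,\ldots,K_m;\cdot)$ against the non--negative mixed area measure, which is exactly what \cite[(5.25)]{bible} gives. Since you then use (v) to prove the ``if'' direction of (iv), that part of your argument for (iv) inherits the same gap; you should either cite \cite[(5.25)]{bible} for (v) as the paper does, or supply the surface--area--measure argument.
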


\begin{proof}
Item 1 is \cite[Theorem~5.1.7]{bible}; item 2 is \cite[(5.26)]{bible}; item 3 is a special case of \cite[(5.31)]{bible}; item 4 is \cite[Theorem~5.1.8]{bible} and item 5 is \cite[(5.25)]{bible}.
\end{proof}

We say that a random vector $X\in\R^m$ is \emph{integrable} if $\EE\|X\|<\infty$. Given an integrable random vector $X\in\R^m$, we denote by $\EE\seg{X}$ the convex body with support function 
\begin{equation}\label{eq:vitalezon}
    h_{\EE\seg{X}}(u):=\frac{1}{2}\EE|\langle u, X\rangle|.
\end{equation}
This function is the support function of a convex body because it is sublinear, see~\cite[Theorem~1.7.1]{bible}. The convex bodies that can be obtained {in} this way (and their translates) are called \emph{zonoids}, see~\cite[Theorem~3.1]{Vitale} and the zonoid $\EE\seg{X}$ is called the \emph{Vitale zonoid} associated to the random vector $X$ (see~\cite{ZA} where $\EE\seg{X}$ is denoted $K(X)$). Note that $h_{[-X,X]}(u)=|\langle u, X\rangle|$ and thus $\EE\seg{X}$ can be thought of as the \emph{expectation} of the random convex body 
\begin{equation}
	\seg{X}:=\tfrac{1}{2}[-X,X]
\end{equation} 
For more details on the theory of random sets, the reader can refer to~\cite{molchRandSets} where $\EE\seg{X}$ is a particular case of a more general notion called the \emph{selection expectation}.

The following observation is~\cite[Proposition~2.4]{ZA}.
\begin{lemma}\label{lem:vitzonlin}
Let $X\in\R^m$ be an integrable random vector and let $M:\R^m\to \R^n$ be a linear map. Then $M(X)\in\R^n$ is integrable and we have $\EE\seg{M(X)}={M}(\EE\seg{X}).$
\end{lemma}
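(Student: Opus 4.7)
The plan is to prove the two claims in sequence: first integrability, then equality of the zonoids via their support functions.

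For integrability, I would use the elementary bound $\|T(X)\|\leq \|T\|_{\mathrm{op}}\,\|X\|$, where $\|T\|_{\mathrm{op}}$ denotes the operator norm of the linear map $T$. Taking expectations and using the hypothesis $\EE\|X\|<\infty$ gives $\EE\|T(X)\|<\infty$, so that the Vitale zonoid $\EE\seg{T(X)}\subset\R^n$ is well defined via~\eqref{eq:vitalezon}.

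For the identity $\EE\seg{T(X)}=T(\EE\seg{X})$, my strategy is to invoke the fact, recalled just before Proposition~\ref{prop:supprop}, that a convex body is uniquely determined by its support function, so it suffices to show that both sides have the same support function on $\R^n$. Fix $u\in\R^n$. On the one hand, by definition~\eqref{eq:vitalezon},
\begin{equation}
h_{\EE\seg{T(X)}}(u)=\tfrac{1}{2}\EE\left|\langle u,T(X)\rangle\right|.
\end{equation}
On the other hand, Proposition~\ref{prop:supprop}(ii) applied to the convex body $\EE\seg{X}$ and the linear map $T$ yields $h_{T(\EE\seg{X})}(u)=h_{\EE\seg{X}}(T^{t}u)$, and a second application of~\eqref{eq:vitalezon} gives
\begin{equation}
h_{T(\EE\seg{X})}(u)=\tfrac{1}{2}\EE\left|\langle T^{t}u,X\rangle\right|.
\end{equation}
The equality of these two expressions is then immediate from the adjoint identity $\langle u,T(X)\rangle=\langle T^{t}u,X\rangle$, which holds pointwise (hence almost surely).

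There is no real obstacle here: the lemma is essentially a bookkeeping statement that says the Vitale-zonoid construction is natural with respect to linear maps, and the only slightly subtle point is remembering to verify integrability of $T(X)$ so that $\EE\seg{T(X)}$ is defined in the first place. One minor point worth flagging in the write-up is that the application of Proposition~\ref{prop:supprop}(ii) implicitly uses that $T^{t}:\R^{n}\to\R^{m}$ is the transpose of $T$ with respect to the standard inner products, which is exactly the convention used both in the cited proposition and in the definition~\eqref{eq:vitalezon} of the support function.
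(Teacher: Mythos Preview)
Your proof is correct and is exactly the standard argument. Note that the paper itself does not supply a proof of this lemma; it simply records it as \cite[Proposition~2.4]{ZA}, so there is no in-paper argument to compare against. Your write-up would serve perfectly well as a self-contained justification, and indeed uses precisely the tools the paper has set up (the characterization of convex bodies by support functions and Proposition~\ref{prop:supprop}(ii)).
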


\subsection{Gaussian vectors and their zonoids}
In this paper, we are interested in a particular class of integrable random vectors, namely the \emph{Gaussian vectors}. Recall that a random vector $X\in\R^m$ is called \emph{Gaussian} if for every $u\in\R^m$, the random variable $\langle u,X\rangle$ is a Gaussian variable. In that case the distribution of $X$ is determined by its mean $\EE X$ and its \emph{covariance matrix} $\EE\left[(X-\EE X)(X-\EE X)^t\right]\in\R^{m \times m}$. 
In the following, we assume that all Gaussian vectors are \emph{non--degenerate}, i.e. that their support is the whole space $\R^m$.

\begin{definition}
A convex body $K\subset\R^m$ is called a \emph{Gaussian zonoid} if there is a Gaussian vector $X\in\R^m$ such that $K=\EE\seg{X}$.
\end{definition}

A particular case is the \emph{standard Gaussian vector} $\xi\in\R^m$ which has mean $0$ and whose covariance matrix is the identity matrix. Its density is given for all $x\in\R^m$ by $\rho(x)=(2\pi)^{-\frac{m}{2}}\exp\left(-\frac{\|x\|^2}{2}\right)$. 
For every (non--degenerate) Gaussian vector $X\in\R^m$, there is a linear map $T:\R^m\to\R^m$ and a constant $s\geq 0$ such that $X$ is distributed as $T(s \, e_1+\xi)$ where $e_1,\ldots, e_m$ is the standard basis of $\R^m$.
We use this fact and Lemma~\ref{lem:vitzonlin} to reduce our study to the case where the Gaussian vector is of the form $s \, e_1+\xi$, i.e. has covariance matrix the identity, hence the following definition.

\begin{definition}\label{def:G(c)}
For every $s\geq 0$, we define 
\begin{equation}
    G(s):=\EE\seg{s e_1+\xi},
\end{equation}
where $\xi\in\R^m$ is a standard Gaussian vector.
\end{definition}

It follows from Lemma~\ref{lem:vitzonlin} that a convex body $K\subset\R^m$ is a Gaussian zonoid if and only if there exists a linear map $T:\R^m\to\R^m$ and a vector $v\in\R^m$ such that $K=T(G(v)).$

In what follows, we will make extensive use of the \emph{error function} $\erf:\R\to\R$ that is given for every $t\in\R$ by
\begin{equation}\label{eq:erfdef}
\erf(t):=\frac{2}{\sqrt{\pi}}\int_0^t e^{-s^2}\dd s.
\end{equation}

Moreover, we will need the following result which is~\cite[(3)]{foldednormal} {(one can also find this formula in the Wikipedia article~\cite{wiki:foldedNormal})}.

\begin{lemma}\label{lem:foldgauss}
Let $\gamma\in\R$ be a Gaussian variable of mean $\mu\in\R$ and variance $\sigma^2>0$. Then its first absolute moment is given by
\begin{equation}
    \EE|\gamma|=\sigma {\sqrt {\frac {2}{\pi }}}\,\,\exp \left({\frac {-\mu ^{2}}{2\sigma ^{2}}}\right)+\mu \,{\erf}\left({\frac {\mu }{\sqrt {2\sigma ^{2}}}}\right).
\end{equation}

\end{lemma}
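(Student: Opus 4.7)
The plan is a direct computation from the definition of the Gaussian density. Writing
\begin{equation}
\EE|\gamma|=\frac{1}{\sqrt{2\pi\sigma^{2}}}\int_{-\infty}^{+\infty}|x|\,\exp\!\left(-\frac{(x-\mu)^{2}}{2\sigma^{2}}\right)\dd x,
\end{equation}
I would split the integral at $0$ to remove the absolute value, then perform the affine substitution $u=x-\mu$ in each piece. This converts the two integrals into $\int_{-\mu}^{+\infty}(u+\mu)e^{-u^{2}/(2\sigma^{2})}\dd u$ and $\int_{-\infty}^{-\mu}(u+\mu)e^{-u^{2}/(2\sigma^{2})}\dd u$ (the latter with a minus sign from $|x|=-x$ on the negative axis).

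Next I would handle the two resulting ingredients separately. The terms linear in $u$ integrate explicitly via the antiderivative $-\sigma^{2}e^{-u^{2}/(2\sigma^{2})}$; evaluating the endpoints, each of the two linear pieces contributes $\sigma^{2}\exp(-\mu^{2}/(2\sigma^{2}))$, so together they produce $2\sigma^{2}e^{-\mu^{2}/(2\sigma^{2})}$. After multiplying by the Gaussian normalization $(2\pi\sigma^{2})^{-1/2}$ this yields exactly the first summand $\sigma\sqrt{2/\pi}\exp(-\mu^{2}/(2\sigma^{2}))$ of the claimed formula.

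The constant-in-$u$ terms, weighted by $\mu$, give
\begin{equation}
\mu\left(\int_{-\mu}^{+\infty}e^{-u^{2}/(2\sigma^{2})}\dd u-\int_{-\infty}^{-\mu}e^{-u^{2}/(2\sigma^{2})}\dd u\right).
\end{equation}
Substituting $v=u/\sqrt{2\sigma^{2}}$ and setting $t=\mu/\sqrt{2\sigma^{2}}$, this becomes $\mu\sqrt{2\sigma^{2}}\bigl(\int_{-t}^{+\infty}e^{-v^{2}}\dd v-\int_{-\infty}^{-t}e^{-v^{2}}\dd v\bigr)$. Using the symmetry $\int_{-\infty}^{-t}e^{-v^{2}}\dd v=\int_{t}^{+\infty}e^{-v^{2}}\dd v$ and $\int_{-\infty}^{+\infty}e^{-v^{2}}\dd v=\sqrt{\pi}$, the parenthesis collapses to $\sqrt{\pi}\,\erf(t)$ by definition~\eqref{eq:erfdef}. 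Combined with the $(2\pi\sigma^{2})^{-1/2}$ prefactor this produces $\mu\,\erf(\mu/\sqrt{2\sigma^{2}})$, completing the formula. The only mild subtlety is the bookkeeping that converts the two one-sided Gaussian tails into a single $\erf$; once the symmetry trick is applied there is no real obstacle.
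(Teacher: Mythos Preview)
Your computation is correct. The paper does not actually supply a proof of this lemma; it simply cites~\cite[(3)]{foldednormal} (and the Wikipedia page on the folded normal distribution) for the formula. Your direct calculation---splitting the integral at the origin, shifting by $\mu$, integrating the odd part explicitly and reducing the even part to $\erf$ via the symmetry identity---is the standard derivation and is exactly what one finds in those references, so there is nothing to add.
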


The first example is when $s=0$.

\begin{proposition}\label{prop:GauBall}
If $\xi\in\R^m$ is a standard Gaussian vector then we have
\begin{equation}
    G(0)=\EE\seg{\xi}=\frac{1}{\sqrt{2\pi}}B_m,
\end{equation}
where $B_m$ is the unit ball of $\R^m$.
\end{proposition}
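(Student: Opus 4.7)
The plan is to compute the support function of $\EE\seg{\xi}$ and compare it with that of $\frac{1}{\sqrt{2\pi}}B_m$, using the fact that convex bodies are determined by their support functions (Proposition~\ref{prop:supprop}(i)).

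First I would fix $u\in\R^m$ and observe that, since $\xi$ is a standard Gaussian vector, the random variable $\langle u,\xi\rangle$ is a real Gaussian variable of mean $0$ and variance $\|u\|^2$. Applying Lemma~\ref{lem:foldgauss} with $\mu=0$ and $\sigma=\|u\|$ immediately yields
\begin{equation}
\EE|\langle u,\xi\rangle|=\|u\|\sqrt{\tfrac{2}{\pi}}.
\end{equation}
Then by the definition~\eqref{eq:vitalezon} of the Vitale zonoid,
\begin{equation}
h_{\EE\seg{\xi}}(u)=\tfrac{1}{2}\EE|\langle u,\xi\rangle|=\tfrac{\|u\|}{\sqrt{2\pi}}.
\end{equation}

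On the other hand, a direct calculation from~\eqref{eq:defsup} (or from Proposition~\ref{prop:supprop}(ii) applied to the scaling $x\mapsto\tfrac{1}{\sqrt{2\pi}}x$) gives $h_{\frac{1}{\sqrt{2\pi}}B_m}(u)=\tfrac{\|u\|}{\sqrt{2\pi}}$, since the support function of the unit ball is the Euclidean norm. Since both support functions coincide, Proposition~\ref{prop:supprop}(i) yields $\EE\seg{\xi}=\tfrac{1}{\sqrt{2\pi}}B_m$.

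There is no real obstacle here: the statement reduces to the one-dimensional computation of the absolute first moment of a centered Gaussian, which is packaged in Lemma~\ref{lem:foldgauss}. The only thing to be careful about is the rotational symmetry argument implicitly being invoked (the variance of $\langle u,\xi\rangle$ is $\|u\|^2$ regardless of the direction of $u$), which is what forces the zonoid to be a ball.
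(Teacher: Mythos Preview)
Your proof is correct and follows essentially the same approach as the paper: both reduce to computing $\EE|\langle u,\xi\rangle|$ via Lemma~\ref{lem:foldgauss} with $\mu=0$. The only cosmetic difference is that the paper first invokes $O(m)$--invariance (via Lemma~\ref{lem:vitzonlin}) to conclude $\EE\seg{\xi}$ is a ball and then evaluates the support function at a single point $e_1$, whereas you compute $h_{\EE\seg{\xi}}(u)$ directly for all $u$; your route is arguably more direct and avoids the extra lemma.
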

\begin{proof}
Since the random vector $\xi$ is invariant under the action of $O(m)$, by Lemma~\ref{lem:vitzonlin}, $\EE\seg{\xi}$ must be a ball. It is then enough to compute $h_{\EE\seg{\xi}}(e_1)=\frac{1}{2}\EE|\langle e_1, \xi\rangle|.$
The random variable $\langle e_1, \xi\rangle$ is a standard Gaussian variable and thus, applying Lemma~\ref{lem:foldgauss} with $\mu=0$ and $\sigma=1$, we get  $\EE|\langle e_1, \xi\rangle|=\sqrt{\tfrac{2}{\pi}}$ which concludes the proof.
\end{proof}

In general, we can compute the support function of $G(s)$ explicitly, see Figure~\ref{fig:GaussianEye}. Note that, by Lemma~\ref{lem:vitzonlin}, $G(s)$ is invariant under the action of $O(m-1)$, the stabilizer of $e_1$ in the orthogonal group $O(m)$, i.e. $G(s)$ is a solid of revolution around the axis spanned by $e_1\in\R^m.$


In what follows, we will write a vector $u\in \R^m=\R e_1\times\R^{m-1}$ as 
\begin{equation}\label{eq:defxy}
    u=(x,y) \quad \text{where} \quad x:=\langle u, e_1 \rangle \in\R \quad \text{and}\quad y\in e_1^\perp=\R^{m-1}
\end{equation}

\begin{proposition}[Support function of Gaussian zonoids]\label{prop:suppG}
Let $s\geq 0$. The support function of $G(s)$ is given, for all $(x,y)\in\R\times\R^{m-1}=\R^m$, in the notation introduced in \eqref{eq:defxy}, by
\begin{equation}
    h_{G(s)}(x,y)=\frac{\sqrt{x^2+\|y\|^2}}{\sqrt{2\pi}}\exp\left(\frac{-s^2x^2}{2(x^2+\|y\|^2)}\right)+\frac{sx}{2}\erf\left(\frac{sx}{\sqrt{2}\sqrt{x^2+\|y\|^2}}\right),
\end{equation}
where recall the definition of the error function $\erf$ in \eqref{eq:erfdef}.
\end{proposition}
\begin{proof}
Consider the Gaussian random variable $\zeta:=\langle (x,y), s e_1+\xi\rangle=xs+\langle (x,y), \xi\rangle\in\R$. We can compute its mean $\EE\zeta=xs$ and variance $\EE\left[(\zeta-sx)^2\right]=\EE\left[\langle (x,y),\xi\rangle^2\right]=x^2+\|y\|^2$. By definition of the zonoid $G(s)$ (Definition~\ref{def:G(c)}) and by \eqref{eq:vitalezon}, the support function of $G(s)$ is given by $h_{G(s)}(x,y)=\tfrac{1}{2}\EE|\zeta|$. Applying then Lemma~\ref{lem:foldgauss} for $\mu=sx$ and $\sigma^2=x^2+\|y\|^2$ and dividing by $2$ gives the result.
\end{proof}
\begin{figure}
    \centering
    \includegraphics[scale=0.8]{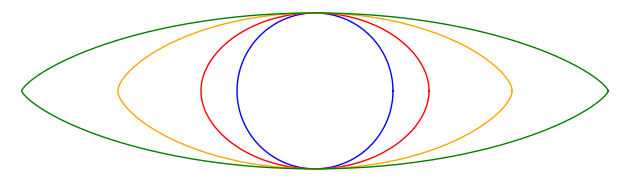}
    \caption{The Gaussian eye: the zonoids $G(s)$ for $s=0,1,2$ and $3$.}
    \label{fig:GaussianEye}
\end{figure}

Let us denote by $\mathcal{K}^m$ the space of convex bodies with the topology induced by the Hausdorff distance (see Section~\ref{subsec:convex} above).

\begin{proposition}
The map $G:\R_{\geq0}\to\mathcal{K}^m$, $s\mapsto G(s)$ is continuous and strictly increasing with respect to inclusion on $s>0.$
\end{proposition}
\begin{proof}
Let us first prove continuity. By Proposition~\ref{prop:suppG}, the function  $h_{G(\cdot)}(\cdot):\R_{\geq0}\times\R^m\to \R$ given by $(s,u)\mapsto h_{G(s)}(u)$ is continuous. In particular, if $s_n\in\R_{\geq0}$ is a sequence that converges to $s\in\R_{\geq0}$, for all $u\in\R^m$, $h_{G(s_n)}(u)\to h_{G(s)}(u)$. In other words, $h_{G(s_n)}$ converges pointwise to $h_{G(s)}$ and we conclude by Proposition~\ref{prop:supprop}--(3).

Now let us prove that it is increasing. By Proposition~\ref{prop:supprop}--(1), it is enough to show that given a fixed non zero point $(x,y)\in\R\times\R^{m-1}$, the function $s\mapsto h_{G(s)}(x,y)$ is increasing. From Proposition~\ref{prop:suppG}, we get:
\begin{equation}
    \frac{\dd}{\dd s} h_{G(s)}(x,y)= \frac{x}{2}\erf\left(\frac{s x}{\sqrt{x^2+\|y\|^2}}\right)
\end{equation}
which is non--negative for all $s\geq 0$ which proves the monotonicity. To prove that it is \emph{strictly} increasing, it is enough to note that $\frac{\dd}{\dd s} h_{G(s)}(x,y)>0$ if $x\neq0$ and $s>0$ and this concludes the proof.
\end{proof}

Note that this propositions implies that the map $v\mapsto \EE\seg{v+\xi}$ where $v\in\R^m$ is a vector, is also continuous.

We see, from Proposition~\ref{prop:suppG}, that for $s>0$ the Gaussian zonoid $G(s)$ is not an ellipsoid. However we shall show, in Theorem~\ref{thm:incluchain} below, that it remains close to one. In order to state this result, let us first introduce a few definitions.

We define $\lambda:\R\to\R$ to be given for all $s\in\R$ by
\begin{equation}\label{eq:lambdac}
    \lambda(s):= e^{\frac{-s^2}{2}}+\sqrt{\frac{\pi}{2}}s\erf\left(\frac{s}{\sqrt{2}}\right).
\end{equation}
Note that, by Proposition~\ref{prop:suppG} and following the same notation, for all $s\geq 0$, we have $h_{G(s)}(1,0)=\frac{\lambda(s)}{\sqrt{2\pi}}.$ Moreover, one can see that we have $\lambda'(s)=\sqrt{\tfrac{\pi}{2}}\erf\left(\tfrac{s}{\sqrt{2}}\right).$
Then we define the function $\varphi_\infty : \R^2\to \R$ given for all $(x,z)\in\R^2$ by
\begin{equation}\label{eq:phifty}
    \varphi_\infty(x,z):=|z| e^{\frac{-x^2}{\pi z^2}}+x\erf\left(\frac{x}{\sqrt{\pi}|z|}\right).
\end{equation}

Finally, we define the linear map $T_s:\R^m\to \R^m$, given, in the standard basis, by the diagonal matrix
\begin{equation}\label{def:Ts}
T_s:=\left( \begin{matrix} \lambda(s) 	    & 	& 0	   &	\\
											&1 	&	   &	\\
								0			&	&\ddots&    \\
											&	&	   &  1
\end{matrix}\right)
\end{equation}
In other words, $T_s(x,y)=(\lambda(s)x,y)$. Note that, since $\lambda(0)=1$, the map $T_0$ is the identity.

\begin{theorem}[Inclusion chain of ellipsoids and Gaussian zonoids]\label{thm:incluchain}
For all $s\geq 0$, we have:
\begin{equation}
    b_\infty T_s\left(\frac{1}{\sqrt{2\pi}}B_m\right)\subset G(s) \subset T_s\left(\frac{1}{\sqrt{2\pi}}B_m\right),
\end{equation}
where $B_m\subset \R^m$ is the unit ball and $b_\infty:=\min \set{\varphi_\infty(\cos(t),\sin(t))}{t\in[0,2\pi]}\sim 0.91\ldots$
\end{theorem}
\begin{proof}
If $s=0$, $G(0)$ is equal to the upper bound and there is nothing to prove. Thus we can assume without loss of generality that $s>0$. Let $\widetilde{G}(s):=\sqrt{2\pi}T_{s}^{-1}G(s)$. The idea of the proof is to show that the map {$s\mapsto \widetilde{G}(s)$} is strictly decreasing with respect to inclusion for $s>0$. Once this is established, it is enough to show that the limit object $\widetilde{G}(\infty)$ exists and contains a ball of radius $b_\infty$.

Let us first show that $s\mapsto \widetilde{G}(s)$ is decreasing. We compute the support function $h_{\widetilde{G}(s)}(x,y)=\sqrt{2\pi}h_{G(s)}\left(\frac{x}{\lambda(s)},y\right)$, where we used the notation introduced in \eqref{eq:defxy}. A straightforward computation shows that for $(x,y)\neq(0,0),$ we have:
\begin{equation}
    h_{\widetilde{G}(s)}(x,y)=\alpha(s)\lambda\left(\beta(s)\right),
\end{equation}
where 
\begin{align}
    \alpha(s)&:=\frac{\sqrt{x^2+\lambda^2(s)\|y\|^2}}{\lambda(s)}; & \beta(s)&:=\frac{xs}{\sqrt{x^2+\lambda^2(s)\|y\|^2}}.
\end{align}
Moreover, we note that $h_{\widetilde{G}(s)}(\pm x,0)=|x|$ and $h_{\widetilde{G}(s)}(0,y)=\|y\|$ do not depend on $s$. Thus we fix $x>0$ and $\|y\|\neq0$ and omit the dependence in $(x,y)$ writing $\varphi(s):=h_{\widetilde{G}(s)}(x,y)=\alpha(s)\lambda(\beta(s)).$ It is then enough to show that $\varphi(s)$ is decreasing on $s>0$, or equivalently that $\varphi'(s)<0$ $\forall s>0$. This requires a bit of work. First we compute $\alpha'$ and $\beta'$ finding:
\begin{align}
    \alpha'(s)&=\frac{-x\beta(s)\lambda'(s)}{\lambda^2(s)s}; & \alpha(s)\beta'(s)&=\frac{x}{\lambda(s)}-\frac{\beta^2(s)\|y\|^2\lambda'(s)}{xs}.
\end{align}
Introducing these in the expression $\varphi'(s)=\alpha'(s)\lambda(\beta(s))+\alpha(s)\beta'(s)\lambda'(\beta(s))$, we find the following.
\begin{equation}\label{eq:thiseqphipr}
    \frac{\lambda^2(s)s}{x\lambda'(s)\lambda'(\beta(s))}\varphi'(s)=-\beta(s)\frac{\lambda(\beta(s))}{\lambda'(\beta(s))}+s\frac{\lambda(s)}{\lambda'(s)}-\frac{\lambda^2(s)\beta^2(s)\|y\|^2}{x^2}.
\end{equation}
We will now express this in terms of yet another function:
\begin{equation}
    \rho(t):=t\frac{\erf'(t)}{\erf(t)}=\sqrt{\frac{2}{\pi}}\, \frac{ t e^{-t^2}}{\erf(t)}.
\end{equation}
We note that $t\frac{\lambda(t)}{\lambda'(t)}=\rho\left(\frac{t}{\sqrt{2}}\right)+t^2$. Thus, reintroducing in \eqref{eq:thiseqphipr} yields:
\begin{equation}\label{eq:tisothereqphir}
     \frac{\lambda^2(s)s}{x\lambda'(s)\lambda'(\beta(s))}\varphi'(s)=\rho\left(\frac{s}{\sqrt{2}}\right)-\rho\left(\frac{\beta(s)}{\sqrt{2}}\right)+s^2-\beta^2(s)-\frac{\lambda^2(s)\beta^2(s)\|y\|^2}{x^2}.
\end{equation}
It remains to see that $\beta^2(s)+\frac{\lambda^2(s)\beta^2(s)\|y\|^2}{x^2}=\frac{\beta^2(s)}{x^2}(x^2+\lambda^2(s)\|y\|^2)=s^2$. Thus, \eqref{eq:tisothereqphir} becomes:
\begin{equation}\label{eq:phirfinal}
     \frac{\lambda^2(s)s}{x\lambda'(s)\lambda'(\beta(s))}\varphi'(s)=\rho\left(\frac{s}{\sqrt{2}}\right)-\rho\left(\frac{\beta(s)}{\sqrt{2}}\right).
\end{equation}
Now, Horst Alzer shows in~\cite[Lemma~2.1]{erfineq} that $\rho(t)$ is strictly decreasing for $t>0$. Moreover since $x>0$ and $y\neq 0$, we have that $\beta(s)<s$ and thus $\rho\left(\frac{\beta(s)}{\sqrt{2}}\right)>\rho\left(\frac{s}{\sqrt{2}}\right)$ for all $s>0$.
Since the coefficient in front of $\varphi'$ in~\eqref{eq:phirfinal} is positive for all $s>0$, this shows that $\varphi'(s)<0$ $\forall s>0$. In definitive we have shown that for all $s>0$ the map $s\mapsto \widetilde{G}(s)$ is (strictly) decreasing with respect to inclusion.

We now proceed to study the limit as $s\to+\infty.$ For any $y\neq 0$, we have $\lim_{s\to+\infty}\alpha(s)=\|y\|$ and $\lim_{s\to+\infty}\beta(s)=\sqrt{\frac{2}{\pi}}\frac{x}{\|y\|}.$
We obtain, for any fixed $(x,y)\in\R\times\R^{m-1}$,
\begin{equation}
    \lim_{s\to+\infty}\varphi(s)=\varphi_\infty(x,\|y\|),
\end{equation}
where recall the definition of $\varphi_\infty$ in~\eqref{eq:phifty}. By Proposition~\ref{prop:supprop}--(3), this is the support function of a convex body (zonoid) that we denote $\widetilde{G}(\infty)$ and we have $\widetilde{G}(s)\to\widetilde{G}(\infty)$ as $s\to+\infty$. By what we just proved, for all $s>0$ we have 
\begin{equation}\label{eq:inclugifty}
    \widetilde{G}(\infty)\subset \widetilde{G}(s) \subset \widetilde{G}(0)=B_m.
\end{equation}
Moreover, $\widetilde{G}(\infty)$ contains a ball of radius 
\begin{equation}
    \min\set{h_{\widetilde{G}(\infty)}(u)}{\|u\|=1}=\min \set{\varphi_\infty(\cos(t),\sin(t))}{t\in[0,2\pi]}=:b_\infty.
\end{equation}
 Mapping everything through the linear map $\frac{1}{\sqrt{2\pi}}T_{s e_1}$ (which preserves inclusion) gives the result.
\end{proof}

\begin{remark}
    This proof is central to this paper. The proof that $s\to \widetilde{G}(s)$ is decreasing is, as the reader may have noticed, highly technical and rely on some non trivial properties of the error function. Most of the other results of this paper are based on this result. The author hope that this will convince that they are highly non trivial.
\end{remark}

From Theorem~\ref{thm:incluchain} and the fact that $\det(T_s)=\lambda(s)$, we get an estimate on the volume of the Gaussian zonoids $G(s)$.

\begin{corollary}[Volume bounds for $G(s)$]\label{cor:volwithlambda}
For every $s\geq0$ we have 
\begin{equation}
    (b_\infty)^m \frac{\lambda(s)}{(2\pi)^{\frac{m}{2}}}\kappa_m\leq \vol_m(G(s))\leq \frac{\lambda(s)}{(2\pi)^{\frac{m}{2}}}\kappa_m,
\end{equation}
where recall that $\kappa_m$ is the volume of the unit ball $B_m\subset \R^m$, $\lambda$ is defined by~\eqref{eq:lambdac} and $b_\infty$ is defined in Theorem~\ref{thm:incluchain}.
\end{corollary}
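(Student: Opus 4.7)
The plan is to simply combine Theorem~\ref{thm:incluchain} with two standard facts: the monotonicity of Lebesgue measure under inclusion, and the behaviour of volume under a linear transformation. Since Theorem~\ref{thm:incluchain} provides the inclusion chain
\[
    b_\infty T_c\!\left(\tfrac{1}{\sqrt{2\pi}}B_m\right)\subset G(c)\subset T_c\!\left(\tfrac{1}{\sqrt{2\pi}}B_m\right),
\]
applying $\vol_m$ yields
\[
    b_\infty^m\,\vol_m\!\left(T_c\!\left(\tfrac{1}{\sqrt{2\pi}}B_m\right)\right)\;\leq\;\vol_m(G(c))\;\leq\;\vol_m\!\left(T_c\!\left(\tfrac{1}{\sqrt{2\pi}}B_m\right)\right),
\]
where the factor $b_\infty^m$ on the left comes from the scaling rule $\vol_m(b_\infty K)=b_\infty^m\vol_m(K)$.

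It remains to evaluate $\vol_m\!\left(T_c(\tfrac{1}{\sqrt{2\pi}}B_m)\right)=|\det T_c|\cdot(2\pi)^{-m/2}\kappa_m$. For $c=0$ the map $T_0$ is the identity and $\lambda(0)=1$, so the identity $|\det T_c|=\lambda(\|c\|)$ holds trivially. For $c\neq 0$, by definition $T_c$ acts as multiplication by $\lambda(\|c\|)$ on the line $\R c$ and as the identity on the orthogonal complement $c^\perp$; choosing an orthonormal basis adapted to the splitting $\R^m=\R c\oplus c^\perp$ makes $T_c$ diagonal, so $\det(T_c)=\lambda(\|c\|)\cdot 1^{m-1}=\lambda(\|c\|)$. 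Noting that $\lambda(\|c\|)\geq \lambda(0)=1>0$ (since $\lambda'(s)=\sqrt{\pi/2}\,\erf(s/\sqrt{2})\geq 0$ as observed in the paper), the absolute value is unnecessary, and inserting this into the displayed bound above gives exactly the stated inequalities. No delicate estimate is needed; the only content is recognising that $\det T_c=\lambda(\|c\|)$, and there is no genuine obstacle here.
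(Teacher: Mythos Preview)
Your proof is correct and follows exactly the paper's own argument: the corollary is stated as an immediate consequence of Theorem~\ref{thm:incluchain} together with the observation that $\det(T_c)=\lambda(\|c\|)$. Your additional justification that $\lambda(\|c\|)>0$ and the explicit diagonalization of $T_c$ are fine elaborations of what the paper leaves implicit.
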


In the proof of Theorem~\ref{thm:incluchain} we proved that, for all $s\geq 0$, $G(s)$ contains the convex body $\tfrac{1}{\sqrt{2\pi}}T_s(\widetilde{G}(\infty))$, see~\eqref{eq:inclugifty}. Moreover we also proved that, as $s\to+\infty$, the renormalized convex body $(T_s)^{-1}(G(s))$ converges to $(1/\sqrt{2\pi})\widetilde{G}(\infty)$. Thus, we can improve the lower bound on the volume of $G(s)$ and compute an asymptotic by computing the volume of $\widetilde{G}(\infty)$.

\begin{proposition}[Lower bound and asymptotic of volume]\label{prop:lowvolandlim}
For all $s\geq 0$, we have 
\begin{equation}
	\frac{\lambda(s)}{(2\pi)^{\frac{m}{2}}}\cdot\frac{2\kappa_{m-1}}{\sqrt{m}}\leq \vol_m(G(s)).
\end{equation}
Moreover, as $s\to+\infty$, we have:
\begin{equation}
\lim_{s\to+\infty}\frac{1}{s}\vol_m(G(s))=\frac{1}{(2\pi)^{\frac{m-1}{2}}}\cdot\frac{\kappa_{m-1}}{\sqrt{m}}.
\end{equation}
\end{proposition}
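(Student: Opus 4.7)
The plan is to compute $\vol_m(\widetilde{G}(\infty))$ explicitly and feed the result into both statements via material already established in the proof of Theorem~\ref{thm:incluchain}. For the lower bound, the inclusion $\widetilde{G}(\infty) \subset \widetilde{G}(\|c\|)$ from~\eqref{eq:inclugifty}, combined with $\widetilde{G}(s) = \sqrt{2\pi}\, T_{se_1}^{-1} G(se_1)$ and the rotational symmetry of $G(\cdot)$, gives $\tfrac{1}{\sqrt{2\pi}} T_c(\widetilde{G}(\infty)) \subset G(c)$; since $\det(T_c) = \lambda(\|c\|)$, taking volumes yields
\[
\vol_m(G(c)) \;\geq\; \frac{\lambda(\|c\|)}{(2\pi)^{m/2}}\, \vol_m(\widetilde{G}(\infty)).
\]
For the asymptotic, Proposition~\ref{prop:supprop}--$(iii)$ and continuity of volume under Hausdorff distance give $\vol_m(\widetilde{G}(s)) \to \vol_m(\widetilde{G}(\infty))$ as $s \to +\infty$; combining with $\vol_m(G(se_1)) = \lambda(s) \vol_m(\widetilde{G}(s))/(2\pi)^{m/2}$ and the elementary asymptotic $\lambda(s)/s \to \sqrt{\pi/2}$ (from $\erf(s/\sqrt{2}) \to 1$) yields the limit after the algebraic simplification $\tfrac{2}{(2\pi)^{m/2}}\sqrt{\pi/2} = (2\pi)^{-(m-1)/2}$.

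Everything thus reduces to computing $\vol_m(\widetilde{G}(\infty))$. Since $\varphi_\infty(x,z)$ depends only on $(x, |z|)$, the convex body $\widetilde{G}(\infty) \subset \R^m$ is a solid of revolution around the $e_1$-axis, so
\[
\vol_m(\widetilde{G}(\infty)) \;=\; \kappa_{m-1} \int_{\R} r(x)^{m-1}\, \dd x,
\]
where $r(x) \geq 0$ is the radius of the $(m-1)$-dimensional slice at height $x$. To find $r$, I parametrize the boundary of the $2$-dimensional axial slice $K^{2D}_\infty \subset \R^2$ (whose support function is $\varphi_\infty$) via the Gauss map from Proposition~\ref{prop:supprop}--$(iv)$: the boundary point with outer unit normal $(\cos t, \sin t)$ is $\nabla \varphi_\infty(\cos t, \sin t)$.

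A direct differentiation yields the remarkably clean identities
\[
\partial_x \varphi_\infty(x, z) = \erf\!\left(\frac{x}{\sqrt{\pi}\, z}\right), \qquad \partial_z \varphi_\infty(x, z) = e^{-x^2/(\pi z^2)} \qquad (z > 0),
\]
because the cross-terms arising from differentiating $x \erf(\cdot)$ cancel exactly against those from differentiating $z\, e^{-x^2/(\pi z^2)}$. Setting $\theta = \cot(t)/\sqrt{\pi}$, the upper boundary of $K^{2D}_\infty$ is thus the curve $\{(\erf(\theta), e^{-\theta^2}) : \theta \in \R\}$, so $r(\erf(\theta)) = e^{-\theta^2}$. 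Substituting $x = \erf(\theta)$ with $\dd x = \tfrac{2}{\sqrt{\pi}} e^{-\theta^2} \dd \theta$ collapses the volume integral to a single Gaussian:
\[
\vol_m(\widetilde{G}(\infty)) = \frac{2 \kappa_{m-1}}{\sqrt{\pi}} \int_{\R} e^{-m \theta^2}\, \dd \theta = \frac{2 \kappa_{m-1}}{\sqrt{m}}.
\]
The one nontrivial step is noticing the identities for $\nabla \varphi_\infty$; they are a short but opaque calculation, and without them the volume integral would be considerably harder. Everything else is routine substitution, the standard continuity of volume, and elementary asymptotics of $\lambda$.
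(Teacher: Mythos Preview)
Your proof is correct and follows essentially the same route as the paper's: both reduce everything to computing $\vol_m(\widetilde{G}(\infty))$ via the gradient of its support function (your identities for $\partial_x\varphi_\infty$ and $\partial_z\varphi_\infty$ are exactly the ones the paper records), parametrize the boundary as $\{(\erf(\theta),e^{-\theta^2})\}$, and collapse the volume integral to $\tfrac{2\kappa_{m-1}}{\sqrt{\pi}}\int_\R e^{-m\theta^2}\,\dd\theta$; the lower bound and the asymptotic are then deduced from the inclusion~\eqref{eq:inclugifty}, $\det T_c=\lambda(\|c\|)$, continuity of volume, and $\lambda(s)/s\to\sqrt{\pi/2}$, just as in the paper. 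The only cosmetic difference is that you pass through the $2$-dimensional axial slice and the angle $t$ before substituting $\theta=\cot(t)/\sqrt{\pi}$, whereas the paper writes the boundary directly as $\|y\|=\exp(-(\erf^{-1}(x))^2)$; and your $\int_\R r(x)^{m-1}\,\dd x$ is really $\int_{-1}^{1}$ since the body projects onto $[-1,1]$ in the first coordinate.
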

\begin{proof}
We are going to show that 
\begin{equation}\label{eq:volGinfty}
vol_m(\widetilde{G}(\infty))=2\kappa_{m-1}/\sqrt{m}.
\end{equation}
 Then the first statement follows from the fact that $G(s)$ contains $\tfrac{1}{\sqrt{2\pi}}T_s(\widetilde{G}(\infty))$.

Recall that the support function of $\widetilde{G}(\infty)$ is given by $h_{\widetilde{G}(\infty)}(x,y)=\varphi_\infty(x,\|y\|)$, where $\varphi_\infty$ is defined in \eqref{eq:phifty}. We compute its gradient and find for $\|y\|\neq 0$:
$\frac{\partial}{\partial x} h_{\widetilde{G}(\infty)}(x,y)=\erf\left(\frac{x}{\sqrt{\pi}\|y\|}\right)$ and $\frac{\partial}{\partial y_i} 	h_{\widetilde{G}(\infty)}(x,y)	=\frac{y_i}{\|y\|}\exp\left(\frac{-x^2}{\pi\|y\|^2}\right)$

Thus, by Proposition~\ref{prop:supprop}-(4), the boundary of $\widetilde{G}(\infty)$ is described by the equation $\|y\|=f(x)$ with $f(x):=\exp\left(-(\erf^{-1}(x))^2\right)$ and $x\in[-1,1]$. Hence its volume is given by
\begin{equation}
\vol_m(\widetilde{G}(\infty))=\int_{-1}^{1}\kappa_{m-1} f(x)^{m-1} \dd x.
\end{equation}
We apply the change of variable $x=\erf(u)$ to find
\begin{equation}
\vol_m(\widetilde{G}(\infty))=\kappa_{m-1} \frac{2}{\sqrt{\pi}}\int_{-\infty}^{\infty} e^{-m u^2} \dd u.
\end{equation}
Changing variables again with $u=s/\sqrt{2m}$ gives~\eqref{eq:volGinfty} and thus the first part of the statement.

To prove the limit, we notice first that, since $(T_s)^{-1}(G(s))$ converges to $(1/\sqrt{2\pi})\widetilde{G}(\infty)$ as $s\to+\infty$, we get that 
\begin{equation}
\lim_{s\to+\infty}\frac{1}{\lambda(s)}\vol_m(G(s))=\frac{1}{(2\pi)^{\frac{m}{2}}}\cdot\frac{2\kappa_{m-1}}{\sqrt{m}}.
\end{equation}
Then it is enough to note that $\lim_{s\to+\infty} \frac{\lambda(s)}{s}=\sqrt{\frac{\pi}{2}}$.
\end{proof}

\section{Application to random determinants}
\label{sec:randet}


Richard Vitale shows in~\cite{Vitale} that if $T\in\R^{m\times m}$ is a random matrix with iid columns distributed as some integrable random vector $X\in\R^m$, then $\EE|\det(T)|=m!\vol_m(\EE\seg{X})$ (recall the definition of $\EE\seg{X}$ in~\eqref{eq:vitalezon}). In \cite{MolchanovWespi}, Ilya Molchanov and Florian Wespi, generalize this result to the case of independent columns (not necessarily identically distributed) and rectangular matrices. Lemma~\ref{lem:vitale} below is  \cite[Theorem~2.1]{MolchanovWespi} reformulated in a language more suitable for our context.

Note that \cite[Section~5]{ZA}, also investigates generalizations of Vitale's result and we will provide a proof of Lemma~\ref{lem:vitale} below based on~\cite[Theorem~5.4]{ZA}.


\begin{lemma}[Expected determinants and Vitale zonoids]\label{lem:vitale}
Let $0<k\leq m$ and let $X_1,\ldots,X_k\in\R^m$ be independent integrable random vectors. Consider the random matrix $\Gamma:=\left(X_1,\ldots,X_k\right)\in\R^{m\times k}$ whose columns are the vectors $X_i$ and let $K_i:=\EE\seg{X_i}$ be the zonoid defined in~\eqref{eq:vitalezon}. Then we have
\begin{equation}
    \EE\sqrt{\det(\Gamma^t\Gamma)}=\frac{m!}{(m-k)!\kappa_{m-k}}\MV(K_1,\ldots,K_k,B_m[m-k]),
\end{equation}
where $B_m[m-k]$ denotes the unit ball $B_m\subset\R^m$ repeated $m-k$ times in the argument and where recall that $\kappa_{m-k}=\vol_{m-k}(B_{m-k})$.
\end{lemma}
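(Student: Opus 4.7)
The plan is to interpret $\sqrt{\det(\Gamma^t\Gamma)}$ as the $k$-dimensional volume of a random zonotope, re-express that volume as a mixed volume in $\R^m$ via Proposition~\ref{prop:MVprop}-(iii), expand by Minkowski linearity to isolate the genuinely multilinear term, and only then exchange expectation and mixed volume slot-by-slot using independence of the $X_i$ together with the defining relation~\eqref{eq:vitalezon} of the Vitale zonoid.

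The quantity $\sqrt{\det(\Gamma^t\Gamma)}$ is the $k$-dimensional volume of the parallelepiped spanned by the columns $X_1,\ldots,X_k$ in $\R^m$. Up to a translation this parallelepiped coincides with the random zonotope
\begin{equation}
    Z:=\sum_{i=1}^k \tfrac{1}{2}[-X_i,X_i]\subset \R^m,
\end{equation}
which almost surely has affine dimension $k$. Applying Proposition~\ref{prop:MVprop}-(iii) to $Z$ yields
\begin{equation}
    \sqrt{\det(\Gamma^t\Gamma)}=\vol_k(Z)=\frac{\binom{m}{k}}{\kappa_{m-k}}\MV(Z[k],B_m[m-k]).
\end{equation}

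Next, I expand each of the $k$ copies of $Z$ by Minkowski linearity (Proposition~\ref{prop:MVprop}-(ii)), producing a sum over multi-indices $(i_1,\ldots,i_k)\in\{1,\ldots,k\}^k$ of mixed volumes $\MV(\tfrac{1}{2}[-X_{i_1},X_{i_1}],\ldots,\tfrac{1}{2}[-X_{i_k},X_{i_k}],B_m[m-k])$. By Proposition~\ref{prop:MVprop}-(iv), any term with a repeated index vanishes: if $i_a=i_b$ for $a\neq b$, the two corresponding slots only contain segments parallel to $X_{i_a}$ and cannot yield two linearly independent vectors. Hence only the $k!$ permutations of $(1,\ldots,k)$ survive and they all agree by symmetry (Proposition~\ref{prop:MVprop}-(i)), giving
\begin{equation}
    \MV(Z[k],B_m[m-k])=k!\,\MV\!\left(\tfrac{1}{2}[-X_1,X_1],\ldots,\tfrac{1}{2}[-X_k,X_k],B_m[m-k]\right).
\end{equation}

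Taking expectation and simplifying $k!\binom{m}{k}=m!/(m-k)!$, the claim reduces to
\begin{equation}
    \EE\,\MV\!\left(\tfrac{1}{2}[-X_1,X_1],\ldots,\tfrac{1}{2}[-X_k,X_k],B_m[m-k]\right)=\MV(K_1,\ldots,K_k,B_m[m-k]).
\end{equation}
By~\eqref{eq:vitalezon}, $h_{K_i}(u)=\EE\,h_{\frac{1}{2}[-X_i,X_i]}(u)$, and the mixed volume admits an integral representation as an iterated integral of support functions. Using $L^1$-integrability of the $X_i$, Fubini, and the independence of the columns, I push $\EE$ into one argument at a time. The main obstacle is making this last exchange rigorous; the cleanest route is to first verify the identity for discrete random vectors (where multilinearity plus independence make the exchange automatic), and then to extend by density using the $L^1$-continuity of $X\mapsto \EE\seg{X}$ in the Hausdorff distance together with continuity of $\MV$ from Proposition~\ref{prop:MVprop}. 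Alternatively, one can simply quote the more general result \cite[Theorem~5.4]{ZA}, of which the stated lemma is the column-independent case.
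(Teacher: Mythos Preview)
Your proof is correct and follows essentially the same route as the paper: interpret $\sqrt{\det(\Gamma^t\Gamma)}$ as the $k$-volume of the random zonotope $\sum_i \tfrac{1}{2}[-X_i,X_i]$, rewrite via Proposition~\ref{prop:MVprop}-(iii), expand by multilinearity and kill repeated-segment terms with Proposition~\ref{prop:MVprop}-(iv), and then push the expectation inside the mixed volume. For that last step the paper simply invokes \cite[Theorem~5.4]{ZA}, which is exactly the alternative you mention; your sketched discrete-approximation argument is a reasonable self-contained substitute but not needed here.
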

\begin{proof}
First note that $\sqrt{\det\left(\Gamma^t\Gamma\right)}$ is equal to the $k$--th dimensional volume of the parallelotope spanned by the vectors $X_1,\ldots,X_k$, i.e. to the volume of the Minkowski sum $[0,X_1]+\cdots+[0,X_k]$. Writing $\seg{X}$ to denote the segment $\frac{1}{2}[-X,X]$ (which is a translate of the segment $[0,X]$), we have that $\sqrt{\det\left(\Gamma^t\Gamma\right)}$ is equal to the $k$--th dimensional volume of $\seg{X_1}+\cdots+\seg{X_k}$. Using Proposition~\ref{prop:MVprop}--(3), we obtain
\begin{equation}
    \sqrt{\det\left(\Gamma^t\Gamma\right)}=\frac{\binom{m}{k}}{\kappa_{m-k}}\MV(\seg{X_1}+\cdots+\seg{X_k}[k],B_m[m-k])=\frac{m!}{(m-k)!\kappa_{m-k}}\MV(\seg{X_1},\ldots,\seg{X_k},B_m[m-k]),
\end{equation}
where the second equality is obtained by expanding by multilinearity the mixed volume (Proposition~\ref{prop:MVprop}-(2)) and eliminating every term where a segment appear twice by Proposition~\ref{prop:MVprop}-(4).
Finally by~\cite[Theorem~5.4]{ZA} we obtain, taking the expectation: 
\begin{equation}
\EE\MV(\seg{X_1},\ldots,\seg{X_k},B_m[m-k])=\MV(K_1,\ldots,K_k,B_m[m-k]),
\end{equation}
which concludes the proof.
\end{proof}

In the case where $X_i$ is a (non--degenerate) Gaussian vector, the zonoid $K_i$ is, by definition, a Gaussian zonoid and thus is a linear image of some $G(s_i)$ (see the previous section). Since the mixed volume is increasing with respect to inclusion (Proposition~\ref{prop:MVprop}-(5)), Lemma~\ref{lem:vitale} and Theorem~\ref{thm:incluchain} imply the following. 
\begin{theorem}[Gaussian determinant estimates]\label{thm:randet}
Let $0<k\leq m$ and let $X_1,\ldots,X_k\in\R^m$ be independent Gaussian vectors such that $X_i=M_i(s_i e_1+\xi_i)$ with $M_i:\R^m\to\R^m$ an invertible linear map, $s_i\geq0$ fixed constants and $\xi_i\in\R^m$ iid standard Gaussian vectors. Consider the random matrix $\Gamma:=(X_1,\ldots,X_k)$ whose columns are the vectors $X_i$ and define the ellipsoids $\mathcal{E}_i:=M_i\circ T_{s_i} \left(B_m\right)$ for $i=1,\ldots,k$, where recall the definition of $T_s$ in \eqref{def:Ts}. We have
\begin{equation}
    (b_\infty)^k \alpha_{m,k} \MV\left(\mathcal{E}_1,\ldots,\mathcal{E}_k,B_m[m-k]\right)\leq  \EE\sqrt{\det\left(\Gamma^t\Gamma\right)}\leq  \alpha_{m,k} \MV\left(\mathcal{E}_1,\ldots,\mathcal{E}_k,B_m[m-k]\right),
\end{equation}
where $B_m[m-k]$ denotes the unit ball $B_m\subset\R^m$ repeated $m-k$ times in the argument of the mixed volume $\MV$,  $\alpha_{m,k}:=\frac{m!}{(2\pi)^{k/2}(m-k)!\kappa_{m-k}}$ and $b_\infty$ is defined in Theorem~\ref{thm:incluchain}.
\end{theorem}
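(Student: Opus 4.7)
The plan is to combine the integral formula for the expected absolute determinant of a matrix with independent Gaussian columns (Lemma~\ref{lem:vitale}) with the two--sided ellipsoidal sandwich from Theorem~\ref{thm:incluchain}, using the monotonicity and multilinearity of the mixed volume to propagate the inclusions all the way to $\EE\sqrt{\det(\Gamma^t\Gamma)}$.

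First I would apply Lemma~\ref{lem:vitale} to write
\begin{equation}
\EE\sqrt{\det(\Gamma^t\Gamma)}=\frac{m!}{(m-k)!\kappa_{m-k}}\MV(K_1,\ldots,K_k,B_m[m-k]),
\end{equation}
where $K_i=\EE\seg{X_i}$. Since $X_i=M_i(c_i+\xi_i)$ with $\xi_i$ standard Gaussian, Lemma~\ref{lem:vitzonlin} combined with Definition~\ref{def:G(c)} gives $K_i=M_i\bigl(G(c_i)\bigr)$. Applying the linear map $M_i$ to the chain of inclusions provided by Theorem~\ref{thm:incluchain} (linear maps preserve inclusions), one obtains, for every $i=1,\ldots,k$,
\begin{equation}
\frac{b_\infty}{\sqrt{2\pi}}\,\mathcal{E}_i\;\subset\;K_i\;\subset\;\frac{1}{\sqrt{2\pi}}\,\mathcal{E}_i,
\end{equation}
with $\mathcal{E}_i=M_i\circ T_{c_i}(B_m)$ as in the statement.

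Next I would plug these inclusions into the mixed volume $\MV(K_1,\ldots,K_k,B_m[m-k])$. By the monotonicity property (Proposition~\ref{prop:MVprop}--$(v)$) applied coordinate by coordinate, together with the Minkowski linearity in each slot (Proposition~\ref{prop:MVprop}--$(ii)$) used to pull out the scalar factors $\tfrac{1}{\sqrt{2\pi}}$ (respectively $\tfrac{b_\infty}{\sqrt{2\pi}}$) from each of the first $k$ arguments, I obtain
\begin{equation}
\frac{(b_\infty)^k}{(2\pi)^{k/2}}\MV(\mathcal{E}_1,\ldots,\mathcal{E}_k,B_m[m-k])\;\leq\;\MV(K_1,\ldots,K_k,B_m[m-k])\;\leq\;\frac{1}{(2\pi)^{k/2}}\MV(\mathcal{E}_1,\ldots,\mathcal{E}_k,B_m[m-k]).
\end{equation}
Iterating the monotonicity over the $k$ slots (possible thanks to symmetry, Proposition~\ref{prop:MVprop}--$(i)$) is the only subtle point: one replaces one $K_i$ at a time by the corresponding scaled $\mathcal{E}_i$, using that the other entries are themselves convex bodies, so each single--slot inequality is preserved.

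Finally, multiplying by $\tfrac{m!}{(m-k)!\kappa_{m-k}}$ and recognizing the prefactor $\alpha_{m,k}=\tfrac{m!}{(2\pi)^{k/2}(m-k)!\kappa_{m-k}}$ yields exactly the two desired bounds. There is no genuine obstacle in the argument; the only thing to be slightly careful about is the correct bookkeeping of the constants $\tfrac{1}{\sqrt{2\pi}}$ that appear from each Gaussian factor and combine into the $(2\pi)^{k/2}$ hidden inside $\alpha_{m,k}$.
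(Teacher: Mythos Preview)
Your proposal is correct and follows exactly the route the paper takes: apply Lemma~\ref{lem:vitale} to express $\EE\sqrt{\det(\Gamma^t\Gamma)}$ as a mixed volume of the Vitale zonoids $K_i=M_i(G(c_i))$, sandwich each $K_i$ between scaled copies of $\mathcal{E}_i$ via Theorem~\ref{thm:incluchain} and Lemma~\ref{lem:vitzonlin}, and conclude by monotonicity and homogeneity of the mixed volume (Proposition~\ref{prop:MVprop}). The paper in fact states this argument in a single sentence preceding the theorem rather than writing out a formal proof, so your version is simply a more detailed rendering of the same idea.
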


This result is to be compared with the centered case (i.e. the case where all $s_i$ are zero) which was proved by Kabluchko and Zaporozhets.

\begin{theorem}[{\cite[Theorem~1.1]{KabluchandZap} }]
    Let $0<k\leq m$ and let $Y_1,\ldots,Y_k\in\R^m$ be centered (i.e. mean $0$) Gaussian vectors with covariance matrix $\Sigma_i:=\EE\left[Y_i Y_i^t\right]=M_iM_i^t,$. Let $\widetilde{\Gamma}:=(Y_1,\ldots,Y_k)$. Then
    \begin{equation}
        \EE\sqrt{\det\left(\widetilde{\Gamma}^t\widetilde{\Gamma}\right)}=  \alpha_{m,k} \MV\left(\mathcal{E}_1,\ldots,\mathcal{E}_k,B_m[m-k]\right)
    \end{equation}
    where $\mathcal{E}_i:=M_i \left(B_m\right)$.
\end{theorem}

Note that we also have $\EE\seg{Y_i}=\tfrac{1}{\sqrt{2\pi}}\mathcal{E}_i$ and thus Lemma~\ref{lem:vitale} gives an alternative proof of~\cite[Theorem~1.1]{KabluchandZap}.
Moreover, one can then interpret Theorem~\ref{thm:randet} by saying that, for random determinants, the \emph{non--centered} Gaussian vector $X_i$ can be estimated from below by the \emph{centered} Gaussian vector $b_\infty Y_i$ and from above by $Y_i$. 

\begin{remark}
Lemma~\ref{lem:vitale} is still valid if the random vectors are degenerate (i.e almost surely contained in a hyperplane). Hence, Theorem~\ref{thm:randet} is still valid if the map $M_i$ is not surjective. However one cannot obtain all degenerate Gaussian vectors this way as this does not cover the case where the support is contained in an affine hyperplane.
\end{remark}

If $k=m$ and all the Gaussian vectors are identically distributed we obtain the following.

\begin{proposition}[Estimates and asymptotics in the \emph{iid} case]
Let $\xi_1,\ldots,\xi_m\in\R^m$ be iid standard Gaussian vectors, let $M:\R^m\to\R^m$ be an invertible linear map, $s\geq 0$ a fixed constant and define the iid Gaussian vectors $X_i:=M(se_1+\xi_i)$ for $i=1,\ldots,m$. Consider the random square matrix $\Gamma:=(X_1,\ldots,X_m)$ whose columns are the vectors $X_i$. We have
\begin{equation}
|\det M|\frac{\lambda(s)}{(2\pi)^{\frac{m}{2}}} \frac{2m!\kappa_{m-1}}{\sqrt{m}}\leq \EE|\det \Gamma|\leq |\det M|\frac{\lambda(s)}{(2\pi)^{\frac{m}{2}}} m!\kappa_m.
\end{equation}
Moreover, as $s\to+\infty$, we have:
\begin{equation}
\lim_{s\to+\infty}\frac{1}{s}\EE|\det \Gamma| =|\det M|\frac{1}{(2\pi)^{\frac{m-1}{2}}}\cdot\frac{m!\kappa_{m-1}}{\sqrt{m}}.
\end{equation}
\end{proposition}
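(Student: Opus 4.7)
The plan is to reduce the double inequality and the asymptotic to the volume estimates for $G(c)$ established in Section~\ref{sec:zonoids}, via the Vitale formula for random determinants.

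I would first apply Lemma~\ref{lem:vitale} with $k=m$. Since $\Gamma$ is square, $\sqrt{\det(\Gamma^t\Gamma)}=|\det\Gamma|$, and the $m-k=0$ copies of $B_m$ disappear ($\kappa_0=1$ and $0!=1$). Because the columns $X_i$ are iid, all the Vitale zonoids $K_i$ agree with a single convex body $K:=\EE\seg{X_1}$, and the mixed volume $\MV(K,\ldots,K)$ reduces to $\vol_m(K)$ by Proposition~\ref{prop:MVprop}-(iii). This yields $\EE|\det\Gamma|=m!\,\vol_m(K)$.

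Next, I would identify $K$ with a linear image of $G(c)$. Since $X_1=M(c+\xi_1)$, Lemma~\ref{lem:vitzonlin} gives $K=M(\EE\seg{c+\xi_1})=M(G(c))$, where the last equality is Definition~\ref{def:G(c)}. As $M$ is invertible, $\vol_m(K)=|\det M|\cdot\vol_m(G(c))$, hence
\begin{equation}
    \EE|\det\Gamma|=m!\,|\det M|\,\vol_m(G(c)).
\end{equation}

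Finally, I would plug in the bounds on $\vol_m(G(c))$ already available: Corollary~\ref{cor:volwithlambda} supplies the upper bound $\vol_m(G(c))\leq (2\pi)^{-m/2}\lambda(\|c\|)\kappa_m$, while Proposition~\ref{prop:lowvolandlim} supplies the sharper lower bound $(2\pi)^{-m/2}\lambda(\|c\|)\cdot 2\kappa_{m-1}/\sqrt{m}\leq\vol_m(G(c))$. Multiplying both by $m!\,|\det M|$ gives exactly the two-sided inequality of the statement. The asymptotic as $\|c\|\to+\infty$ is obtained in the same way from the limit of $\vol_m(G(c))/\|c\|$ also contained in Proposition~\ref{prop:lowvolandlim}. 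No step is really an obstacle here: all the analytic content (the control of $\vol_m(G(c))$ via $\widetilde{G}(\infty)$ and the leading behavior of $\lambda(s)/s$ as $s\to\infty$) has already been carried out, and this proposition just repackages those results in the iid Gaussian determinant setting.
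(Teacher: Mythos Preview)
Your proposal is correct and matches the paper's proof essentially step for step: the paper also invokes Lemma~\ref{lem:vitale} to get $\EE|\det\Gamma|=m!\vol_m(M(G(c)))=m!\,|\det M|\,\vol_m(G(c))$, then draws the upper bound from Corollary~\ref{cor:volwithlambda} and the lower bound and the limit from Proposition~\ref{prop:lowvolandlim}. The only difference is that you spell out the intermediate identifications (Proposition~\ref{prop:MVprop}-(iii), Lemma~\ref{lem:vitzonlin}) a bit more explicitly than the paper does.
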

\begin{proof}
By Lemma~\ref{lem:vitale}, we have that $\EE|\det \Gamma|=m!\vol_m(M(G(s)))=m!|\det M|\vol_m(G(s))$. Then the upper boud follows from Corollary~\ref{cor:volwithlambda}, and the lower bound and the limit follow from Proposition~\ref{prop:lowvolandlim}.
\end{proof}

\begin{remark}
Note that the Gaussian vectors $X_i=M(se_1+\xi_i)$ have covariance matrix $MM^t$ and thus $|\det M|=\sqrt{\det(MM^t)}$ is a function of this covariance matrix.
\end{remark}

\section{Gaussian perturbation of a hypersurface}\label{sec:GRF}
In this section we apply the previous results to study the zero sets of \emph{non--centered} Gaussian random fields. 
Let us recall that A \emph{Gaussian Random Field} (GRF) on a manifold $M$ is a random function $X: M\to \R$, such that for all finite collection of points $p_1,\ldots,p_k\in M$, the random vector $(X(p_1),\ldots,X(p_k))\in\R^k$ is a Gaussian vector. In the following, we will only consider GRFs that are almost surely smooth. We say that a GRF is \emph{centered} if for every $p\in M$, $\EE X(p)=0$. Moreover, we will assume that the GRF is non--degenerate in the following sense. 

\begin{definition}\label{def:nondegGRF}
    A centered GRF $Y:M\to\R$ on a manifold $M$ is said to be \emph{non--degenerate} if for every $p\in M$, we have $\det \left(\EE [(d_pY) (d_pY)^T]\right)\neq 0$ where $d_pY\in T^*_pM$ is the differential of $Y$ at $p$.
    The Riemannian metric thus defined will be called the metric \emph{induced} by $Y$ and denoted $g_Y$, see \cite[Chapter~12]{AT}. Concretely it is given for all $v,w\in T_pM$ by
    \begin{equation}
        g_Y(v,w):=\EE\left[d_pY(v)d_pY(w)\right].
    \end{equation}
    A GRF $X$ is said to be \emph{non--degenerate} if the centered GRF $Y:=X-\EE X$ is non--degenerate.
\end{definition}

Note that the random vector $d_pY\in T^*_pM$ is a \emph{standard Gaussian vector} for the Euclidean structure given by $g_Y$ (or more precisely its dual metric on $T^*M$), i.e. if we identify $T^*_pM$ with $\R^m$ using a basis of $T_pM$ that is orthonormal for $g_Y$ then $d_pY$ becomes a standard Gaussian vector. Hence the following definition.

\begin{definition}
    Let $(M,g)$ be a Riemannian manifold, a GRF $Y\in C^\infty(M)$ is called a \emph{standard GRF} if for all $p\in M$, we have $\EE Y(p)=0$, $\EE [Y(p)^2]=1$ and $g_{Y,p}=g_p$, where $g_{Y,p}$ is the Riemannian metric induced by $Y$ at the point $p$.
\end{definition}

\begin{remark}\label{rk:allStandard}
    The existence of a standard GRF impose no restriction on the Riemannian metrig $g$. In other words, for every Riemannian manifold $(M,g)$, there is a centered GRF $Y$ such that $g_Y=g$, see \cite[Remark~6.4]{ZonSec} or \cite{AT}.{ Note that in \cite{ZonSec} standard GRF are called \emph{normal}, here we chose to call it \emph{standard} to emphasize the analogy with the standard Gaussian vectors of the previous sections.}
\end{remark}

We use the framework {developed} in \cite{ZonSec} where the author, together with Michele Stecconi, study the zero set of a certain class of random fields (that they call zKROK fields) that contain Gaussian Random Fields \cite[Proposition~4.11]{ZonSec}. We will now reformulate the main results of \cite{ZonSec} in our context of GRFs. 

Given a (non--degenerate) GRF $X$ on a manifold $M$ and a point $p\in M$ one can consider the \emph{conditionned} GRF $(X|X(p)=0)$, see \cite[Section~4.2]{ZonSec}. Considering the differential at a point $p\in M$, one then builds the random vector $(d_pX|X(p)=0)\in T^*_pM$. Using the Vitale construction, one associates to a GRF a family of zonoids in the cotangent space, see~\cite[Definition 5.1]{ZonSec}.

\begin{definition}[Zonoid section]\label{def:zonsec}
Let $X:M\to\R$ be a GRF. For all $p\in M$, we define the following zonoid in $T^*_pM$:
\begin{equation}
	\zeta_X(p):=\rho_{X(p)}(0)\EE\left[\seg{\dd_p X} | X(p)=0 \right],
\end{equation}
where $\rho_{X(p)}(0)$ is the density of the random variable $X(p)\in\R$ at $0$ and $\EE\left[\seg{\dd_p X} | X(p)=0 \right]$ is the Vitale zonoid associated to the random vector $(\dd_pX|X(p)=0)$. We call $\zeta_X$ the \emph{zonoid section} associated to the random field $X$.
\end{definition}

\begin{remark}
    The zonoid section just defined is slightly different than in \cite[Definition 5.1]{ZonSec} where $\seg{\dd_p X}$ is replaced by $[0,d_pX]$. The zonoid section defined by Definition~\ref{def:zonsec}, is called the \emph{centered} zonoid section in \cite{ZonSec} and is denoted by $\seg{\zeta_X}$. It is a translate of what they call the \emph{zonoid section}. Nevertheless, this will not affect the results we present here since we only consider translation invariant quantities such as volume or mixed volume. 
\end{remark}

The main result of \cite{ZonSec} is that the zonoid section computes the expected volume of the zero set of random fields. In our context of (scalar) GRF, we will use the following special case, which is \cite[Corollary 7.2]{ZonSec}.

\begin{proposition}[Expected number of points and mixed volume]\label{prop:MatStekmain}
	Let $M$ be a Riemannian manifold of dimension $m$, let $X_1,\ldots,X_m:M\to\R$ be independent GRF and let $Z_i:=X_i^{-1}(0)$, $i=1,\ldots,m$. We have, for all $U\subset M$ open:
	\begin{equation}
	\EE\#(Z_1\cap\cdots\cap Z_m\cap U)=m!\int_U \MV(\zeta_{X_1}(p),\ldots,\zeta_{X_m}(p))\, \dd M(p),
	\end{equation}
	where $\dd M(p)$ denotes the integration with respect to the volume form on $M$ and $\MV$ is the \emph{mixed volume} (see beginning of Section~\ref{sec:randet}).
\end{proposition}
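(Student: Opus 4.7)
The plan is to combine the independent scalar fields into a single vector-valued field $X := (X_1,\ldots,X_m) : M \to \R^m$, whose zero set on $U$ is exactly $Z_1 \cap \cdots \cap Z_m \cap U$. Since the $X_i$ are independent zKROK fields, $X$ inherits the regularity and non-degeneracy required for the Kac--Rice formula to apply; this is precisely the general setup of \cite{ZonSec} and the statement is their \cite[Corollary~7.2]{ZonSec}. Below I sketch how one recovers the formula from Kac--Rice plus our Lemma~\ref{lem:vitale}.

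First I would write Kac--Rice on $U$:
\begin{equation}
\EE\#(X^{-1}(0) \cap U) = \int_U \rho_{X(p)}(0)\, \EE\bigl[|\det \dd_p X| \,\big|\, X(p) = 0\bigr] \, \dd M(p),
\end{equation}
where $\det \dd_p X$ is computed in a local orthonormal frame of $T_p M$. Independence of the $X_i$ yields both the factorization $\rho_{X(p)}(0) = \prod_{i=1}^m \rho_{X_i(p)}(0)$ and the fact that the rows of the conditional matrix $(\dd_p X \mid X(p) = 0)$ are independent, the $i$-th row being distributed as $(\dd_p X_i \mid X_i(p) = 0)$.

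Next, using that the determinant is invariant under transposition, I would apply Lemma~\ref{lem:vitale} with $k = m$ to obtain
\begin{equation}
\EE\bigl[|\det \dd_p X| \,\big|\, X(p)=0\bigr] = m!\,\MV\bigl(\EE\seg{\dd_p X_1 \mid X_1(p)=0}, \ldots, \EE\seg{\dd_p X_m \mid X_m(p)=0}\bigr).
\end{equation}
Multilinearity of the mixed volume (Proposition~\ref{prop:MVprop}--$(ii)$) then lets me absorb each density factor $\rho_{X_i(p)}(0)$ into the corresponding argument, turning the integrand into $m!\,\MV(\zeta_{X_1}(p), \ldots, \zeta_{X_m}(p))$ by Definition~\ref{def:zonsec}, and integration over $U$ closes the argument.

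The main obstacle is the analytic justification of Kac--Rice together with the precise meaning of the conditional Jacobian $(\dd_p X \mid X(p) = 0)$ in the zKROK setting; verifying measurability, establishing the regular conditional probability, and controlling the boundary of $U$ are exactly what the zKROK formalism of \cite{ZonSec} was designed to handle. For this reason, rather than rebuild that machinery here, the cleanest route is to cite \cite[Corollary~7.2]{ZonSec} directly.
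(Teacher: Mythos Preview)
Your proposal is correct and aligned with the paper: the paper does not give a proof of this proposition at all, it simply states that it is \cite[Corollary~7.2]{ZonSec}. Your final recommendation to cite that result directly is therefore exactly what the paper does; the Kac--Rice sketch you provide (factorize the joint density by independence, apply Lemma~\ref{lem:vitale} to the conditional Jacobian, then absorb the density factors via multilinearity) is more detail than the paper itself offers and is a sound outline of how the cited corollary is obtained.
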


This is again to be compared with the result of Kabluchko and Zaporozhets \cite[Theorem~1.5]{KabluchandZap} that deals with the case where the GRF are all centered, in which case the zonoids $\zeta_{X_i}(p)$ are ellipsoids.
For a general GRF $X$, the random vector $(d_pX|X(p)=0)$ is a Gaussian vector, and thus the zonoid section $\zeta_X(p)$ is, by definition, a Gaussian zonoid.

For the rest of this section, we fix a Riemannian manifold $M$ of dimension $m$ and we define a GRF which will be our main object of study.

\begin{definition}\label{def:Xtau}
Let $Y\in C^\infty(M)$ be a standard GRF. Let $\varphi\in C^\infty(M)$ be a fixed smooth function and let $\tau\geq 0$. We define the following GRF.
\begin{equation}
	X_\tau:=\varphi+\tau Y.
\end{equation}
Moreover, we write $Z_\tau:=X_\tau^{-1}(0)$.
\end{definition}

If $0$ is a regular value of $\varphi$, i.e. $d_p\varphi\neq 0$ $\forall p\in Z_0$, then $Z_\tau$ is a one parameter family of random perturbation of the hypersurface $Z_0=\varphi^{-1}(0)$. We expect this family to be more \emph{diffuse} on $M$ as $\tau\to+\infty$ and to \emph{concentrate} near $Z_0$ as $\tau\to 0$. We will show how to make these statements precise using the zonoid section and Proposition~\ref{prop:MatStekmain}.

We can compute the zonoid section explicitely in terms of Gaussian zonoids. For this purpose, it is convenient to introduce a basis-independent version of the zonoid $G(s)$ from the previous sections.

\begin{definition}
    Let $V$ be an euclidean vector space, let $v\in V$ and let $\xi\in V$ be a standard Gaussian vector. We write
    \begin{equation}
        \vG(v):=\EE\seg{v+\xi}.
    \end{equation}
\end{definition}
If we identify $V\cong\R^m$ using an orthonormal basis $e_1,\ldots, e_m$ of course we have $G(s)=\vG(se_1)$ for all $s\geq 0$.

\begin{proposition}
For all $\tau>0$, the random field $X_\tau$ defined in Definition~\ref{def:Xtau} is a non--degenerate GRF and its zonoid section is given for all $p\in M$ by
\begin{equation}
\zeta_\tau(p):=\zeta_{X_\tau}(p)=\frac{e^{\frac{-\varphi(p)^2}{2\tau^2}}}{\sqrt{2\pi}}\vG\left(\frac{\dd_p\varphi}{\tau}\right).
\end{equation}
\end{proposition}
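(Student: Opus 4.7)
The plan has two independent parts: first the zKROK claim, then the explicit formula for $\zeta_\tau(p)$.

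For the zKROK assertion, I would invoke the cited criterion (\cite[Proposition~4.9]{ZonSec}): a smooth GRF is zKROK as soon as $X_\tau(p)$ is a non-degenerate Gaussian variable for every $p$. Since $X_\tau(p) = \varphi(p) + \tau g(p)$ and $g(p)$ is standard Gaussian, $X_\tau(p)$ is Gaussian with mean $\varphi(p)$ and variance $\tau^2 > 0$, which is non-degenerate. So this step is immediate.

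For the zonoid section, I unpack Definition~\ref{def:zonsec}. The density factor is straightforward: $X_\tau(p) \sim \mathcal{N}(\varphi(p), \tau^2)$ gives
\begin{equation}
\rho_{X_\tau(p)}(0) = \frac{1}{\tau\sqrt{2\pi}}\, e^{-\varphi(p)^2/(2\tau^2)}.
\end{equation}
The key step is to identify the conditional law $(\dd_pX_\tau \mid X_\tau(p) = 0)$. Since $\varphi$ is deterministic, $\dd_pX_\tau = \dd_p\varphi + \tau\,\dd_pg$ and the event $\{X_\tau(p)=0\}$ coincides with $\{g(p)=-\varphi(p)/\tau\}$. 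Because $g$ is standard, Remark~\ref{rk:stdgaussindep} tells us that $g(p)$ is independent of $\dd_pg$, so conditioning on the value of $g(p)$ leaves the law of $\dd_pg$ (and hence of $\dd_pX_\tau$) unchanged. Thus, as a random covector in $T^*_pM$,
\begin{equation}
(\dd_pX_\tau \mid X_\tau(p) = 0) \stackrel{d}{=} \dd_p\varphi + \tau\,\dd_pg.
\end{equation}

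Now I would recognize the right-hand side as a linear image of a standard Gaussian plus a translation. By~\eqref{eq:ATmetric} (or Remark~\ref{rk:stdgivesball}), $\dd_pg$ is a standard Gaussian vector in $T^*_pM$ for the Riemannian inner product, so writing $\xi := \dd_pg$ we have $\dd_p\varphi + \tau \xi = \tau\bigl(\dd_p\varphi/\tau + \xi\bigr)$. Applying Lemma~\ref{lem:vitzonlin} to the scaling by $\tau$ yields
\begin{equation}
\EE\!\left[\seg{\dd_pX_\tau} \,\middle|\, X_\tau(p) = 0\right] = \tau\,\EE\!\seg{\dd_p\varphi/\tau + \xi} = \tau\, G\!\left(\dd_p\varphi/\tau\right),
\end{equation}
by Definition~\ref{def:G(c)}. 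Combining with the density prefactor, the $\tau$ factors cancel and give exactly
\begin{equation}
\zeta_\tau(p) = \rho_{X_\tau(p)}(0)\cdot \tau\, G(\dd_p\varphi/\tau) = \frac{e^{-\varphi(p)^2/(2\tau^2)}}{\sqrt{2\pi}}\, G\!\left(\dd_p\varphi/\tau\right).
\end{equation}

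I do not expect any real obstacle. The only place one must be slightly careful is the independence argument that trivializes the conditioning — it hinges on $g$ being standard (so Remark~\ref{rk:stdgaussindep} applies) — and on using the Riemannian identification consistently, so that $\dd_pg$ really is a standard Gaussian in $T^*_pM$ and Lemma~\ref{lem:vitzonlin} can be applied to the scalar multiplication by $\tau$.
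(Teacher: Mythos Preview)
Your proof is correct and follows essentially the same route as the paper: compute the Gaussian density of $X_\tau(p)$ at $0$, use the independence of $g(p)$ and $\dd_pg$ (Remark~\ref{rk:stdgaussindep}) to trivialize the conditioning, and then recognize $\EE\seg{\dd_p\varphi+\tau\,\dd_pg}=\tau\,G(\dd_p\varphi/\tau)$ via the scaling in Lemma~\ref{lem:vitzonlin}. The only cosmetic difference is that the paper phrases the independence step directly as ``$X_\tau(p)$ is independent of $\dd_pX_\tau$'' and invokes Remark~\ref{rk:condornotcond}, whereas you equivalently condition on $g(p)$; the content is identical.
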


\begin{proof}
For all $p\in M$, $X_\tau(p)\in\R$ is a Gaussian variable with mean $\varphi(p)$ and variance $\tau^2>0$ and the centered GRF $X-\varphi$ is a multiple of the standard GRF $Y$. Thus $X_\tau$ is a non-degenerate GRF and for all $p\in M$ the density at $0$ of $X_\tau(p)$ is given by:
\begin{equation}\label{eq:rhoxt}
\rho_{X_\tau(p)}(0)=\frac{e^{\frac{-\varphi(p)^2}{2\tau^2}}}{\sqrt{2\pi}\tau}.
\end{equation}
Moreover, for all $p\in M$, $Y(p)$ is independent of $\dd_pY$. Indeed, by differentiating the equation $\EE Y(p)^2=\tau^2$, we get $\EE \left[Y(p) d_pY\right]=0$. Thus $X_\tau(p)=\varphi(p)+\tau Y(p)$ is independent of $\dd_p X_\tau=\dd_p\varphi+\tau \dd_p Y$. It follows that the random vector $(\dd_p X_\tau|X(p)=0)$ has the same law as $\dd_p X_\tau$. We obtain
\begin{equation}\label{eq:expdxt}
\EE\left[\seg{\dd_p X_\tau} | X_\tau(p)=0 \right]=\EE\seg{\dd_p X_\tau}=\EE\seg{\dd_p\varphi+\tau \dd_p Y}=\tau \cdot \vG\left(\frac{\dd_p\varphi}{\tau}\right),
\end{equation}
where, in the last equality, we used the fact that $\dd_pY$ is a standard Gaussian vector in $T^*_pM$ (with the Riemannian metric). The result then follows by multiplying \eqref{eq:expdxt} by \eqref{eq:rhoxt}.
\end{proof}

These zonoids satisfy the following properties. The proofs are completely straightforward and thus omitted here.

\begin{proposition}[Properties of $\zeta_\tau$]\label{prop:propofzetat}
The zonoid section $\zeta_\tau$ satisfy the following properties.
\begin{enumerate}
	\item For all $p\in M$, we have:
	\begin{equation}
	\zeta_\tau(p) \xrightarrow[\tau \to +\infty] {}\frac{1}{2\pi} B_M(p),
	\end{equation}
	where $B_M(p)\subset T^*_pM$ is the unit ball for the dual metric on $T^*_pM$.
	\item For all $p\in M\setminus Z_0$ we have:
	\begin{equation}
	\zeta_\tau(p) \xrightarrow[\tau \to 0] {}\{0\}.
	\end{equation}
	\item Let $p\in Z_0$, and let $\pi_p:T^*_pM\to T^*_pZ_0$ be the orthogonal projection. For all $\tau>0$, we have:
	\begin{equation}
	\pi_p(\zeta_\tau(p))=\frac{1}{2\pi} B_{Z_0}(p),
	\end{equation}
	where $B_{Z_0}(p)\subset T^*_pZ_0$ is the unit ball for the Riemannian metric on $Z_0$ induced from the Riemannian metric on $M$.
\end{enumerate}
\end{proposition}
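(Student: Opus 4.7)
The plan is to verify each of the three limits directly from the explicit formula $\zeta_\tau(p) = \tfrac{1}{\sqrt{2\pi}}e^{-\varphi(p)^2/(2\tau^2)}\,G(\dd_p\varphi/\tau)$ just derived, combined with the continuity of $c\mapsto G(c)$ (proved immediately after Proposition~\ref{prop:suppG}), Proposition~\ref{prop:GauBall} (which identifies $G(0)=\tfrac{1}{\sqrt{2\pi}}B_m$), and Lemma~\ref{lem:vitzonlin} (behavior of Vitale zonoids under linear maps). In each case the computation reduces to a known fact about Gaussian zonoids after extracting the appropriate scalar factor.

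For item (1), as $\tau\to\infty$ the scalar prefactor tends to $1/\sqrt{2\pi}$ and $\dd_p\varphi/\tau\to 0$ in $T^*_pM$; continuity of $G$ and Proposition~\ref{prop:GauBall} then give $G(\dd_p\varphi/\tau)\to \tfrac{1}{\sqrt{2\pi}}B_M(p)$, and multiplying by the prefactor yields the claim. For item (2), fix $p\notin Z_0$, so $\varphi(p)\neq 0$ and the prefactor $\rho_{X_\tau(p)}(0)=e^{-\varphi(p)^2/(2\tau^2)}/(\sqrt{2\pi}\tau)$ decays as $\tau\to 0$. I would bound the support function
\begin{equation}
h_{\zeta_\tau(p)}(u) \;=\; \rho_{X_\tau(p)}(0)\cdot\tfrac{1}{2}\EE\bigl|\langle u,\dd_p\varphi+\tau\dd_pg\rangle\bigr|
\end{equation}
uniformly on $\{\|u\|=1\}$ by the triangle inequality: for $\tau\in(0,1]$ one obtains $h_{\zeta_\tau(p)}(u)\leq C\,\rho_{X_\tau(p)}(0)$ with $C$ depending only on $\|\dd_p\varphi\|$ and $\sup_{\|u\|=1}\EE|\langle u,\dd_pg\rangle|$. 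Since this upper bound tends to $0$ uniformly in $u$, we obtain Hausdorff convergence of $\zeta_\tau(p)$ to $\{0\}$.

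For item (3), I assume implicitly that $0$ is a regular value of $\varphi$, so that $\dd_p\varphi\neq 0$ at $p\in Z_0$ and $T^*_pZ_0$ identifies with the orthogonal complement of $\dd_p\varphi$ in $T^*_pM$. At such $p$ the prefactor equals $1/\sqrt{2\pi}$ and $\zeta_\tau(p)=\tfrac{1}{\sqrt{2\pi}}G(\dd_p\varphi/\tau)$. Applying Lemma~\ref{lem:vitzonlin} to the orthogonal projection $\pi_p$ and using that $\dd_p\varphi\in\ker\pi_p$, one obtains $\pi_p(G(\dd_p\varphi/\tau))=\EE\seg{\pi_p(\dd_p\varphi/\tau+\xi)}=\EE\seg{\pi_p(\xi)}$, where $\xi$ is a standard Gaussian in $T^*_pM$. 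Since $\pi_p(\xi)$ is again a standard Gaussian on $T^*_pZ_0$, Proposition~\ref{prop:GauBall} applied on $T^*_pZ_0$ gives $\EE\seg{\pi_p(\xi)}=\tfrac{1}{\sqrt{2\pi}}B_{Z_0}(p)$, and multiplying by $1/\sqrt{2\pi}$ yields the claim. None of these steps presents a genuine obstacle; the only structural observation required is that $\dd_p\varphi$ is annihilated by $\pi_p$, which reduces the last computation to the centered case already handled in Section~\ref{sec:zonoids}.
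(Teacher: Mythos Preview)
Your proof is correct. The paper does not actually supply a proof of this proposition---it is introduced as a list of ``obvious properties'' and left unproved---so your argument fills in exactly the verification the author omits, using the natural ingredients (continuity of $c\mapsto G(c)$, Proposition~\ref{prop:GauBall}, and Lemma~\ref{lem:vitzonlin}). Your observation that item~(3) tacitly requires $\dd_p\varphi\neq 0$ (so that $Z_0$ is a submanifold near $p$ and $T^*_pZ_0$, $B_{Z_0}(p)$ make sense) is well taken; this assumption is implicit in the statement and made explicit later in Theorem~\ref{thm:concentration}.
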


\begin{remark}\label{rk:geopropzetatau}
    These properties have a geometric interpretation. Indeed the zonoid section $\zeta_\tau$ corresponds to a Finsler structure on $M$, i.e. a norm on each tanget space, given by the support function of the zonoids $\zeta_\tau(p)$. The zonoid $\zeta_\tau$ is then the dual unit ball of this metric (or equivalentely the unit ball of the dual metric on the cotangent space), see~\cite[Section~9]{ZonSec}. 
    For each $\tau>0$ we then have a \emph{Finsler geometry on $M$}. Property $(1)$ says that when $\tau$ is large this geometry tends to the Riemannian geometry that we started with. Property $(2)$ says that, when $\tau$ is small, everything outside $Z_0$ \emph{disappears}, i.e. the Finsler geometry shrinks around $Z_0$ as $\tau$ goes to zero. Finally, Property $(3)$ says that, for all $\tau>0$, this Finsler geometry restricted to $Z_0$ gives the Riemannian geometry of $Z_0$ (in particular it is independent of $\tau$). 
\end{remark}

Next, we show that, we can \emph{estimate} the GRF $X_\tau$ by some centered GRF.

\begin{theorem}[Estimates by centered GRF]\label{thm:centeredGRFestimate}
For every $\tau>0$, there is a centered GRF $\widetilde{X}_\tau$ such that for every GRFs $W_1,\ldots,W_{m-1}\in C^\infty(M)$ independents and independents of $X_\tau,\widetilde{X}_\tau$, writing $\widetilde{Z}_\tau:=\widetilde{X}_\tau^{-1}(0)$ and $\mathcal{W}:=W_1^{-1}(0)\cap\cdots\cap W_{m-2}^{-1}(0)$, we have, for every open set $U\subset M$:
\begin{equation}
b_\infty\cdot\EE\#\left(\widetilde{Z}_\tau\cap \mathcal{W}\cap U\right)\leq\EE\#\left(Z_\tau\cap \mathcal{W}\cap U\right)\leq \EE\#\left(\widetilde{Z}_\tau\cap \mathcal{W}\cap U\right),
\end{equation}
where recall the definition of $b_\infty$ in Theorem~\ref{thm:incluchain}.
\end{theorem}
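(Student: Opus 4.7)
The plan is to apply Theorem~\ref{thm:incluchain} pointwise to the zonoid section $\zeta_\tau(p)$, to construct a standard GRF $\widetilde{X}_\tau$ whose zonoid section realises the outer ellipsoid of that sandwich, and then to transfer the pointwise inclusion to the expected intersection counts via the kinematic formula of Proposition~\ref{prop:MatStekmain} together with the monotonicity and Minkowski linearity of the mixed volume.

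Setting $c_p:=\dd_p\varphi/\tau$ and using that multiplication by a positive scalar preserves inclusion, the formula $\zeta_\tau(p)=\tfrac{e^{-\varphi(p)^2/(2\tau^2)}}{\sqrt{2\pi}}G(c_p)$ established above, combined with Theorem~\ref{thm:incluchain}, yields for every $p\in M$ the sandwich
\begin{equation}
    b_\infty\,\mathcal{E}_\tau(p)\ \subset\ \zeta_\tau(p)\ \subset\ \mathcal{E}_\tau(p),\qquad \mathcal{E}_\tau(p):=\frac{e^{-\varphi(p)^2/(2\tau^2)}}{2\pi}\,T_{c_p}\bigl(B_M(p)\bigr),
\end{equation}
where $B_M(p)\subset T^*_pM$ is the Riemannian unit ball and $T_{c_p}$ is the self-adjoint positive definite map of Theorem~\ref{thm:incluchain}.

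The main step is to produce a standard GRF $\widetilde{X}_\tau$ with $\zeta_{\widetilde{X}_\tau}(p)=\mathcal{E}_\tau(p)$. By Remark~\ref{rk:stdgaussindep}, $\widetilde{X}_\tau(p)$ is automatically independent of $\dd_p\widetilde{X}_\tau$, so the zonoid section reduces to $\zeta_{\widetilde{X}_\tau}(p)=\tfrac{1}{\sqrt{2\pi}}\EE\seg{\dd_p\widetilde{X}_\tau}$. By Lemma~\ref{lem:vitzonlin} and Proposition~\ref{prop:GauBall}, it therefore suffices to arrange that at each point $p$ the differential $\dd_p\widetilde{X}_\tau$ is a centered Gaussian random cotangent vector whose covariance bilinear form on $T_pM$ equals
\begin{equation}
    \Sigma_p := e^{-\varphi(p)^2/\tau^2}\,T_{c_p}^{\,2},
\end{equation}
since then $\EE\seg{\dd_p\widetilde{X}_\tau}=\tfrac{1}{\sqrt{2\pi}}\Sigma_p^{1/2}\bigl(B_M(p)\bigr)=\tfrac{e^{-\varphi(p)^2/(2\tau^2)}}{\sqrt{2\pi}}T_{c_p}\bigl(B_M(p)\bigr)$ and hence $\zeta_{\widetilde{X}_\tau}(p)=\mathcal{E}_\tau(p)$. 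To realise such a GRF globally, I would endow $M$ with the new Riemannian metric $\tilde g_p:=\Sigma_p$ (viewed as a bilinear form on $T_pM$); this is smooth and positive definite because $T_c$ depends smoothly on $c$ (from $\lambda(s)=1+s^2/2+O(s^4)$ and $\lambda$ even one gets $T_c=I+f(\|c\|^2)\,cc^t$ for a smooth function $f$). A standard GRF $\widetilde{X}_\tau$ on $(M,\tilde g)$ is then supplied by the Adler--Taylor construction \cite[Section~10.2]{AT}, and~\eqref{eq:ATmetric} applied to $\tilde g$ gives precisely the required covariance $\Sigma_p$ for $\dd_p\widetilde{X}_\tau$ with respect to the original metric; being a smooth non-degenerate GRF, $\widetilde{X}_\tau$ is zKROK.

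With $\widetilde{X}_\tau$ so constructed, Proposition~\ref{prop:MatStekmain} applied to the independent zKROK fields $(X_\tau,Y_1,\ldots,Y_{m-1})$ and to $(\widetilde{X}_\tau,Y_1,\ldots,Y_{m-1})$ expresses the two expected counts as integrals over $U$ of the mixed volume $\MV(\,\cdot\,,\zeta_{Y_1}(p),\ldots,\zeta_{Y_{m-1}}(p))$, evaluated respectively at $\zeta_\tau(p)$ and at $\mathcal{E}_\tau(p)$. The pointwise sandwich of the second paragraph, combined with the monotonicity (Proposition~\ref{prop:MVprop}-$(v)$) and Minkowski linearity (Proposition~\ref{prop:MVprop}-$(ii)$) of the mixed volume, then yields both inequalities after integrating over $U$. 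The main obstacle is the construction step: one needs the smoothness of $c\mapsto T_c$ through the origin so that $\tilde g$ is a bona fide Riemannian metric, together with the standard existence result for smooth Gaussian random fields with prescribed covariance on a Riemannian manifold.
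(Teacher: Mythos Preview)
Your proof is correct and follows essentially the same route as the paper: apply Theorem~\ref{thm:incluchain} pointwise to $\zeta_\tau(p)$, interpret the ellipsoid section as a Riemannian metric, invoke the Adler--Taylor construction to obtain a standard GRF $\widetilde{X}_\tau$ realising that ellipsoid section, and conclude via Proposition~\ref{prop:MatStekmain} and monotonicity of the mixed volume. Your additional care about the smoothness of $c\mapsto T_c$ at $c=0$ and the use of Minkowski linearity to extract the scalar $b_\infty$ makes explicit two points that the paper's proof leaves implicit.
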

\begin{proof}
Let $p\in M$ and let $e_1,\ldots,e_m$ be an orthonormal basis of $T^*_pM$ such that $d_p\varphi=\|d_p\varphi\|e_1$. We use this basis to identify $T^*_pM\cong \R^m$. By Theorem~\ref{thm:incluchain}, for every $p\in M$, we have that:
\begin{equation}
\frac{b_\infty}{2\pi}\mathcal{E}_\tau(p)\subset \zeta_\tau(p)\subset \frac{1}{2\pi}\mathcal{E}_\tau(p),
\end{equation}
where $\mathcal{E}_\tau(p)=e^{\frac{-\varphi(p)^2}{2\tau^2}}T_{s}B_M(p)$ with $s=\|\dd_p\varphi\|/\tau$ and where recall the definition of the linear map $T_s$ in \eqref{def:Ts}. Now the ellipsoid section $\mathcal{E}_\tau$ defines a Riemannian metric $\widetilde{g}_\tau$ on $M$ for every $\tau>0$ and thus, as observed in Remark~\ref{rk:allStandard}, there is a standard GRF $\widetilde{X}_\tau$ for this metric. In that case, the associated zonoid section is precisely $\zeta_{\widetilde{X}_\tau}(p)=\frac{1}{2\pi}\mathcal{E}_\tau(p)$ for every $p\in M$. The result then follows from Proposition~\ref{prop:MatStekmain} and monotonicity of the mixed volume.
\end{proof}
We note that the ellipsoid section $\frac{1}{2\pi}\mathcal{E}_\tau$ satisfies the properties of Proposition~\ref{prop:propofzetat}. {Consequently}, the Riemannian metric $\widetilde{g}_\tau$ also satisfy the geometric properties observed in Remark~\ref{rk:geopropzetatau}.

\begin{remark}\label{rk:vectGRF}
	A more refined analysis of \cite{ZonSec} (more precisely, using \cite[(7.2)]{ZonSec}), shows that the assumption that the random fields $W_1,\ldots,W_{m-1}$ are independent is not needed and one only needs that the random field $(W_1,\ldots,W_{m-1}):M\to\R^{m-1}$ is a vector valued non--degenerate GRF independent of $X_\tau$ and $\widetilde{X}_\tau$. Moreover, the Gaussian assumption is also superfluous and one could use instead the so called zKROK fields of \cite{ZonSec}.
\end{remark}

We conclude this section by an asymptotic analysis as $\tau\to 0$. In this limit, we expect the random zero set $Z_\tau$ to \emph{concentrate} near $Z_0$. As observed in Remark~\ref{rk:geopropzetatau}, Proposition~\ref{prop:propofzetat}-$(2)$ can already be interpreted as a \emph{concentration} near $Z_0$ in this limit. Nevertheless, we can give a more precise and quantitative result.


For all $r>0$ we define the following open neighbourhood of $Z_0$:
\begin{equation}
\mathcal{U}_r:=\set{p\in M}{|\varphi(p)|<r}\subset M.
\end{equation}
Moreover, for all $\tau,r>0$ we let
\begin{equation}
n_{r,\tau}:=\EE\#\left(Z^{(1)}_\tau\cap\cdots\cap Z^{(m)}_\tau\cap \mathcal{U}_r\right),
\end{equation}
where $Z^{(1)}_\tau,\ldots, Z^{(m)}_\tau$ are iid copies of $Z_\tau=X_\tau^{-1}(0)$.

\begin{theorem}[The limit $\tau\to 0$]\label{thm:concentration}
Let $M$ be a compact manifold of dimension $m$ and assume that $0$ is a regular value of $\varphi$, i.e. $d_p\varphi\neq 0$ for all $p\in Z_0$. Let $r=r(\tau)>0$ be such that $\lim_{\tau\to 0} r = 0$ and $\alpha:=\lim_{\tau\to 0}\tfrac{r}{\tau}\in[0,+\infty]$ exists. Then we have:
\begin{equation}
\lim_{\tau\to 0} n_{r,\tau}={\frac{2}{s_{m-1}}}\cdot \erf\left(\sqrt{\frac{m}{2}}\cdot \alpha\right)\cdot \vol_{m-1}(Z_0)
\end{equation}
{where $s_{m-1}$ denotes the $(m-1)$ dimensional volume of the unit sphere $S^{m-1}\subset \mathbb{R}^m$.}
\end{theorem}
\begin{proof}
By Proposition~\ref{prop:MatStekmain}, we have
\begin{equation}
n_{r,\tau}=m!\int_{\mathcal{U}_r}\vol_m(\zeta_\tau(p))\dd M(p)=\frac{m!}{(2\pi)^{\frac{m}{2}}}\int_{\mathcal{U}_r}e^{\frac{-m\varphi(p)^2}{2\tau^2}}\vol_m\left(\vG\left(\frac{\dd_p\varphi}{\tau}\right)\right)\dd M(p).
\end{equation}
Since $0$ is a regular value of $\varphi$, for $\tau$ small enough, there is no critical point of $\varphi$ in $\mathcal{U}_r$. Thus we can apply the smooth coarea formula for the function $\varphi:\mathcal{U}_r\to (-r,r)$ to get:
\begin{equation}
n_{r,\tau}=\frac{m!}{(2\pi)^{\frac{m}{2}}}\int_{-r}^re^{\frac{-m t^2}{2\tau^2}}\int_{S_t}\frac{1}{\|\dd_p\varphi\|}\vol_m\left(\vG\left(\frac{\dd_p\varphi}{\tau}\right)\right)\dd S_t(p)\dd t,
\end{equation}
where $S_t:=\varphi^{-1}(t)$. Now we apply the change of variable $u=\sqrt{\frac{m}{2}}\cdot \frac{t}{\tau}$ to obtain:
\begin{equation}
n_{r,\tau}=\frac{m!}{(2\pi)^{\frac{m}{2}}}	\sqrt{\frac{2}{m}}	\int_{-\sqrt{\frac{m}{2}}\cdot\frac{r}{\tau}}^{\sqrt{\frac{m}{2}}\cdot\frac{r}{\tau}} 	e^{-u^2}		\int_{S_{t(u)}}		\frac{\tau}{\|\dd_p\varphi\|}\vol_m\left(\vG\left(\frac{\dd_p\varphi}{\tau}\right)\right)\dd S_{t(u)}(p)\dd u,
\end{equation}
where $t(u)=\sqrt{\frac{2}{m}} \cdot \tau u$. We have that $t(u)\in[-r,r]$ and thus $t(u)\to 0$ uniformly in $u$ as $\tau\to 0$. Moreover, by Proposition~\ref{prop:lowvolandlim}, we have
$\frac{\tau}{\|\dd_p\varphi\|}\vol_m\left(\vG\left(\frac{\dd_p\varphi}{\tau}\right)\right)\xrightarrow[\tau\to 0]{}\frac{\kappa_{m-1}}{\sqrt{m}(2\pi)^{\frac{m-1}{2}}}.$
By compactness, this is also uniform in $p$.
We obtain, as $\tau\to 0$:
\begin{equation}
n_{r,\tau}\to \frac{(m-1)!}{(2\pi)^m}\sqrt{2\pi}\kappa_{m-1}\sqrt{2}\cdot 2\int_0^{\sqrt{\frac{m}{2}}\alpha}e^{-u^2}\dd u\,\vol_{m-1}(S_0).
\end{equation}
The result then follows by noting that $S_0=Z_0${, recognizing the error function and from the identity $(m-1)!\kappa_{m-1}=2\tfrac{(2\pi)^{m-1}}{s_{m-1}}$ (see for example \cite[Lemma A.4.]{handbook})}.

\end{proof}

\bibliographystyle{amsplain}
\bibliography{literature}

\end{document}